\documentclass[10pt]{amsart}
\usepackage{amssymb}
\usepackage{amsthm,amsmath}
\usepackage{cite}
\usepackage{graphicx}
\usepackage{MnSymbol}

\title[Repeated Principal Indefinite Summation]{Repeated Principal Indefinite Summation}

\author{Thomas Lamby}
\address{University of Luxembourg, Department of Mathematics, Maison du Nombre, 6, avenue de la Fonte, L-4364 Esch-sur-Alzette, Luxembourg}\thanks{Corresponding author: Thomas Lamby, University of Luxembourg, Department of Mathematics, Maison du Nombre, 6, avenue de la Fonte, L-4364 Esch-sur-Alzette, Luxembourg. Email: thomas.lamby[at]uni.lu}
\email{thomas.lamby[at]uni.lu}

\author{Jean-Luc Marichal}
\address{University of Luxembourg, Department of Mathematics, Maison du Nombre, 6, avenue de la Fonte, L-4364 Esch-sur-Alzette, Luxembourg}
\email{jean-luc.marichal[at]uni.lu}

\date{\today}

\begin{document}

\theoremstyle{plain}

\newtheorem{theorem}{Theorem}[section]
\newtheorem{lemma}[theorem]{Lemma}
\newtheorem{proposition}[theorem]{Proposition}
\newtheorem{corollary}[theorem]{Corollary}
\newtheorem{fact}[theorem]{Fact}

\theoremstyle{definition}

\newtheorem{definition}[theorem]{Definition}

\newtheorem{ex}[theorem]{Example}
    \newenvironment{example}
    {\renewcommand{\qedsymbol}{$\lozenge$}
    \pushQED{\qed}\begin{ex}}
    {\popQED\end{ex}}

\newtheorem{rem}[theorem]{Remark}
    \newenvironment{remark}
    {\renewcommand{\qedsymbol}{$\lozenge$}
    \pushQED{\qed}\begin{rem}}
    {\popQED\end{rem}}

\theoremstyle{remark}

\newtheorem*{claim}{Claim}

\newcommand{\R}{\mathbb{R}}
\newcommand{\N}{\mathbb{N}}
\newcommand{\Z}{\mathbb{Z}}
\newcommand{\C}{\mathbb{C}}
\newcommand{\id}{\mathrm{id}}
\newcommand{\cC}{\mathcal{C}}
\newcommand{\cD}{\mathcal{D}}
\newcommand{\cK}{\mathcal{K}}
\newcommand{\dom}{\mathrm{dom}\,}
\newcommand{\ran}{\mathrm{ran}\,}

\def\tchoose#1#2{{\textstyle{{{#1}\choose{#2}}}}}

\newcommand{\jlm}[1]{\marginpar{\hrule\medskip JL: #1}}

\begin{abstract}
Under suitable asymptotic and convexity conditions on a function $g\colon\R_+\to\R$, the solution to $\Delta f=g$, where $\Delta$ is the forward difference operator, is unique up to an additive constant and is called the principal indefinite sum of $g$, generalizing the additive form of Bohr-Mollerup's theorem. We consider the map $\Sigma$, which assigns to each admissible function $g$ its principal indefinite sum that vanishes at $1$, and we naturally explore its iterates, which produce repeated principal indefinite sums, in analogy with the concept of repeated indefinite integrals. Explicit formulas and convergence results are established, highlighting connections with classical combinatorial and special functions, including the multiple gamma functions, for which we also provide integral representations.
\end{abstract}

\subjclass[2020]{Primary 39B62, 39A70. Secondary 26A51, 33B15, 33E20, 39B52.}

\keywords{Difference operator, Principal indefinite sum, Repeated summation, Bohr-Mollerup's theorem, Higher order convexity, Special function, Multiple gamma function}

\maketitle
\section{Introduction}

Let $g\colon \R_+\to\R$ be a real-valued function defined on the open half-line $\R_+=(0,\infty)$. The class of functions $f\colon \R_+\to\R$ that are solutions to the functional equation $\Delta f=g$ on $\R_+$, where $\Delta$ denotes the classical forward difference operator, is usually referred to as the \emph{indefinite sum} of $g$; see, for example, Graham \emph{et al}.~\cite[p.\ 48]{GraKnuPat94}. This class has the property that any two such solutions differ by a $1$-periodic function.

For instance, if $g$ is the logarithm function, $g(x)=\ln x$, then the indefinite sum of $g$ consists of all the functions of the form
\begin{equation}\label{eq:11tzi}
f(x) ~=~ \ln\Gamma(x)+\omega(x)\qquad (x>0),
\end{equation}
where $\ln\Gamma(x)$ is the log-gamma function and $\omega\colon\R_+\to\R$ is a $1$-periodic function.

Under certain assumptions on the function $g$, its indefinite sum contains a distinguished subclass known as the \emph{principal indefinite sum} of $g$, whose definition we now recall.

Let $\N$ denote the set of nonnegative integers. For any $p\in\N$, we let $\cD^p$ denote the class of functions $f\colon \R_+\to\R$ for which the sequence $n\mapsto \Delta^pf(n)$ converges to zero. We also let $\cK^p$ denote the class of functions $f\colon \R_+\to\R$ that are eventually $p$-convex or eventually $p$-concave (see Definition~\ref{de:pconvpconc}).

Marichal and Zena\"{\i}di \cite[Chapter 5]{MarZen22} established the following result, which may be viewed as a generalization of the additive form of the Bohr-Mollerup theorem. \emph{If the function $g\colon \R_+\to\R$ lies in $\cD^p\cap\cK^p$ for some $p\in\N$, then there is a unique (up to an additive constant) solution $f\colon \R_+\to\R$ in $\cK^p$ to the equation $\Delta f=g$}. Moreover, the special solution that vanishes at $1$ is denoted $\Sigma g$ and can be expressed via the pointwise limit
\begin{equation}\label{eq:lim1}
\Sigma g(x) ~=~ \lim_{n\to\infty}f_n^p[g](x)\qquad (x>0),
\end{equation}
where
\begin{equation}\label{eq:fnp2}
f_n^p[g](x) ~=~ \sum_{k=1}^{n-1}g(k)-\sum_{k=0}^{n-1}g(x+k)+\sum_{j=1}^p\tchoose{x}{j}\,\Delta^{j-1} g(n).
\end{equation}

In this context, the \emph{principal indefinite sum} \cite[Definition 5.4]{MarZen22} of $g$ is the class of functions of the form $\Sigma g(x)+c$, where $c\in\R$. Principal indefinite sums actually constitute a broad family of functions that arise frequently in mathematical analysis. Various examples are presented in detail in \cite[Chapters 10–12]{MarZen22} and \cite{MarZen24}.

Considering again the logarithm function $g(x)=\ln x$, we retrieve the additive form of Bohr-Mollerup's theorem \cite{BohMol22}, which essentially states that the only solutions $f\colon\R_+\to\R$ to the equation $\Delta f=g$ on $\R_+$ that are eventually $1$-convex or eventually $1$-concave (that is, eventually convex or eventually concave) are of the form
$$
f(x) ~=~ \Sigma g(x)+c ~=~ \ln\Gamma(x)+c\qquad (x>0),
$$
where $c\in\R$. These functions therefore constitute the \emph{principal indefinite sum} of the logarithm function, namely the subclass of the indefinite sum \eqref{eq:11tzi} obtained by requiring the $1$-periodic functions $\omega$ to be constant. With a slight abuse of language, we say that the principal indefinite sum of the logarithm function is the log-gamma function. Moreover, as we can easily verify, Eqs.~\eqref{eq:lim1} and \eqref{eq:fnp2} then provide the additive version of Gauss' definition of the gamma function:
\begin{equation}\label{eq:GaussLimit00}
\Gamma(x) ~=~ \lim_{n\to\infty}\,\frac{n!\, n^x}{x(x+1)\,\cdots\, (x+n)}\qquad (x>0).
\end{equation}

Thus defined, the symbol $\Sigma$ denotes the map that assigns to each function $g$ lying in
\begin{equation}\label{eq:domS11}
\dom\Sigma ~=~ \bigcup_{p\geq 0}(\cD^p\cap\cK^p)
\end{equation}
the function $\Sigma g\colon\R_+\to\R$ defined by Eqs.~\eqref{eq:lim1} and \eqref{eq:fnp2}.

In this paper, we are interested in functions $g\colon\R_+\to\R$ for which the map $\Sigma$ can be applied repeatedly to produce ``repeated principal indefinite sums,'' in full analogy with the concept of repeated indefinite integrals.

To this end, we define the iterates of $\Sigma$ in the following classical way, where $\mathrm{id}$ denotes the identity map:
$$
\Sigma^0 ~=~ \mathrm{id},\qquad\Sigma^1 ~=~ \Sigma,\qquad\text{and}\qquad\Sigma^{m+1} ~=~ \Sigma\circ\Sigma^m\qquad (m\in\N).
$$
For instance, it is straightforward to verify that if $g(x)=x$ is the identity function, then, for any $m\in\N$, we have
$$
\Sigma^mg(x) ~=~ \tchoose{x}{m+1}\qquad (x>0),
$$
and it is not difficult to show that this latter function lies in $\cD^{m+2}\cap\cK^{m+2}$ (for details, see Proposition~\ref{prop:pol2} below).

This paper is organized as follows. In Section 2, we review preliminary results and recall the concept of multiple gamma functions, which serve as a motivating example of repeated principal indefinite sums. Sections 3 and 4 extend classical results---the analogs of Gauss' limit and Bohr-Mollerup's theorem---to the setting of repeated principal indefinite sums, thereby enabling us to characterize these functions. This characterization, in turn, allows us in Section 5 to establish unexpected integral representations for the logarithm of multiple gamma functions. Section 6 presents examples of repeated principal indefinite sums, including those arising from certain rational functions. To this end, we derive a general formula for the repeated principal indefinite sums of functions obtained via certain linear operators, including the derivative operator. Finally, in Section 7, we derive an alternative explicit form of repeated principal indefinite sums by first developing a discrete analog of Taylor's theorem.

Throughout this paper, we adopt the following conventional notation. When the derivative operator $D$ is applied to a function $g\colon\R_+\to\R$, its value at a point $x$ is written as $Dg(x)$ rather than $(Dg)(x)$. If $D$ is applied directly to an expression, for example $x^2+x$, the resulting function is naturally denoted $D_x(x^2 + x)$ instead of $D(x^2 + x)$. This convention is used for all maps on functions from $\mathbb{R}_+$ to $\mathbb{R}$ considered in this paper, including the difference operator $\Delta$, the principal indefinite sum map $\Sigma$, and their iterates.

\section{Preliminaries}

In this section, we recall several basic definitions and properties that will be used throughout this paper, and we present a notable example that motivates the subsequent discussion: the \emph{multiple gamma functions}.

Let us first recall the definition of the higher-order convexity properties in terms of divided differences. For background, see for instance \cite[Section 2.2]{MarZen22} and the references therein.

\begin{definition}\label{de:pconvpconc}
Let $I$ be any real interval and let $p\geq -1$ be an integer. A function $f\colon I\to\R$ is said to be \emph{convex of order $p$}, or simply, \emph{$p$-convex}, if the divided difference of $f$ at any $p+2$ pairwise distinct points $x_0,x_1,\ldots,x_{p+1}$ in $I$ is positive; that is,
$$
f[x_0,x_1,\ldots,x_{p+1}] ~\geq ~ 0.
$$
The function $f\colon I\to\R$ is said to be \emph{concave of order $p$}, or simply, \emph{$p$-concave} if its opposite $-f$ is $p$-convex.
\end{definition}

For any real interval $I$ and any integer $p\geq -1$, we let $\cK^p_1(I)$ (resp.\ $\cK^p_{-1}(I)$) denote the class of functions $f\colon I\to\R$ that are $p$-convex (resp.\ $p$-concave). Similarly, we let $\cK_1^p$ (resp.\ $\cK_{-1}^p$) denote the class of functions $f\colon\R_+\to\R$ that are eventually $p$-convex (resp.\ eventually $p$-concave). We also let
$$
\cK^p(I) ~=~ \cK^p_1(I)\cup\cK^p_{-1}(I),\qquad\cK^p ~=~ \cK^p_1\cup\cK^p_{-1}\, .
$$
With these definitions, $\cK^{-1}(I)$ consists of functions on $I$ that keep a constant sign. Likewise, $\cK^0(I)$ is the class of functions on $I$ that are monotone, while $\cK^1(I)$ consists of functions on $I$ that are convex or concave.

Interestingly, it follows readily from Eqs.~\eqref{eq:lim1} and \eqref{eq:fnp2} that if $g$ lies in $\cD^p\cap\cK^p_1$ for some $p\in\N$, then $\Sigma g$ lies in $\cK^p_{-1}$; see also \cite[Chapter 5]{MarZen22}.

Before proceeding further, we also recall the following fundamental propositions \cite[Sections 2.2 and 4.2]{MarZen22}, which will be used frequently throughout this paper. For any open real interval $I$, we let $\cC^p(I)$ denote the space of all real-valued functions defined on $I$ that are $p$ times continuously differentiable.

\begin{proposition}\label{prop:ffprime32}
Let $I$ be an open real interval, let $f\colon I\to\R$ be a function, and let $p\in\N$. Then the following assertions hold:
\begin{itemize}
\item We have $\cK^{p+1}(I) \subset \cC^p(I)$.
\item If $I$ is right-unbounded and $f\in\cK^p_1(I)$, then $\Delta f\in\cK^{p-1}_1(I)$.
\item If $f$ is differentiable, then $f\in\cK^p_1(I)$ if and only if $f'\in\cK^{p-1}_1(I)$.
\end{itemize}
\end{proposition}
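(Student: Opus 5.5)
The plan is to prove the three items separately, working throughout with the recursive definition of divided differences,
$$
f[x_0,\ldots,x_{k}] ~=~ \frac{f[x_1,\ldots,x_k]-f[x_0,\ldots,x_{k-1}]}{x_k-x_0},
$$
and with its symmetry in the nodes. The basic tool is a monotonicity lemma. Suppose $f\in\cK^p_1(I)$, so that all divided differences on $p+2$ nodes are nonnegative. Then $x\mapsto f[y_1,\ldots,y_p,x]$, with $p$ fixed nodes, is nondecreasing on $I\setminus\{y_1,\ldots,y_p\}$. Indeed, for $x\neq x'$,
$$
\frac{f[y_1,\ldots,y_p,x']-f[y_1,\ldots,y_p,x]}{x'-x} ~=~ f[y_1,\ldots,y_p,x,x'] ~\geq~ 0 .
$$
By symmetry, the same holds in each argument. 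Hence the $(p+1)$-node divided difference is nondecreasing in each of its nodes separately, as long as the nodes stay pairwise distinct.

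\emph{Second item.} Let $x_0<\cdots<x_p$ in $I$. By linearity,
$$
(\Delta f)[x_0,\ldots,x_p] ~=~ f[x_0+1,\ldots,x_p+1]-f[x_0,\ldots,x_p].
$$
It therefore suffices to pass from $(x_0,\ldots,x_p)$ to $(x_0+1,\ldots,x_p+1)$ by moving one node at a time to the right without ever creating a collision. Since $I$ is right-unbounded, every intermediate node stays in $I$. I would shift $x_p$ first and then $x_{p-1},\ldots,x_0$ in turn. After $x_{p},\ldots,x_{i+1}$ have been shifted, the node $x_i+1$ lies strictly between $x_{i-1}$ and the shifted node $x_{i+1}+1$. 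So no two nodes coincide at any stage (a single node may be moved in several small steps if needed). Each move does not decrease the divided difference, by the lemma. Hence $(\Delta f)[x_0,\ldots,x_p]\geq 0$, that is, $\Delta f\in\cK^{p-1}_1(I)$.

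\emph{Third item.} For the forward direction, the argument of the second item works with the step $1$ replaced by any small $h>0$. On the compact set of nodes, right-unboundedness is not needed for small $h$. This gives $\bigl(f(\cdot+h)-f\bigr)[x_0,\ldots,x_p]/h\geq 0$. Letting $h\to 0^+$ and using the differentiability of $f$ together with the linearity of divided differences in the function, we get $f'[x_0,\ldots,x_p]\geq 0$.

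For the converse, suppose $f'\in\cK^{p-1}_1(I)$ but $f[x_0,\ldots,x_{p+1}]<0$ for some distinct nodes. Let $P$ be the polynomial of degree at most $p+1$ interpolating $f$ at these nodes; its leading coefficient is $f[x_0,\ldots,x_{p+1}]$. The function $q=f-P$ is differentiable and vanishes at $p+2$ points. By Rolle's theorem, $q'$ vanishes at $p+1$ distinct interlacing points $\xi_0<\cdots<\xi_p$. Then
$$
f'[\xi_0,\ldots,\xi_p] ~=~ P'[\xi_0,\ldots,\xi_p] ~=~ (p+1)\,f[x_0,\ldots,x_{p+1}] ~<~ 0,
$$
which contradicts the $(p-1)$-convexity of $f'$. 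The case $p=0$ is the usual mean-value argument.

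\emph{First item.} I would argue by induction on $p$. For $p=0$, a $1$-convex function is convex, hence continuous on the open interval $I$. For the inductive step, take $f\in\cK^{p+1}_1(I)$; the concave case follows by passing to $-f$. By the lemma, for fixed distinct $y_1,\ldots,y_{p+1}$, the map $x\mapsto f[y_1,\ldots,y_{p+1},x]$ is monotone. It is therefore locally bounded, which makes $f$ locally Lipschitz. Moreover, one-sided limits of difference quotients of suitable lower-order expressions exist.

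The main obstacle is to upgrade this to genuine differentiability of $f$ and to show that $f'$ is $p$-convex; the induction hypothesis then gives $f'\in\cC^{p-1}(I)$, hence $f\in\cC^p(I)$. What I would try first is the following. Show that the one-sided derivatives $f'_\pm$ exist everywhere as limits of the monotone quantities $f[x,x+h]$. Then show that their divided differences on $p+1$ nodes are nonnegative, as limits of the nonnegative difference quotients from the second item. This makes $f'_+$ and $f'_-$ $p$-convex, and for $p\geq 1$ it makes them continuous by the induction hypothesis. Two continuous one-sided derivatives that agree off a countable set (monotonicity arguments give this) must coincide, so $f$ is differentiable with $f'$ $p$-convex.
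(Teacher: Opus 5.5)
The paper does not prove this proposition; it is quoted from \cite[Sections 2.2 and 4.2]{MarZen22}, so your argument has to stand on its own. Your second and third items are correct. The monotonicity lemma combined with node-by-node shifting gives $f[x_0+h,\ldots,x_p+h]\geq f[x_0,\ldots,x_p]$. Your care about collisions is not even needed: the lemma gives monotonicity on all of $I\setminus\{y_1,\ldots,y_p\}$, and only the endpoints of each move matter. The Rolle argument for the converse in the third item is also correct.

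The gap is in the inductive step of the first item, at the point you yourself flag as the main obstacle.

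\begin{itemize}
\item For $f\in\cK^{p+1}_1(I)$, your lemma only says that the top-order divided difference $f[y_1,\ldots,y_{p+1},x]$ is monotone in $x$. When $p\geq 1$ it gives no monotonicity of the first-order quotient $h\mapsto f[x,x+h]$, and that quotient need not be monotone. For example, $f(x)=x^3$ lies in $\cK^2_1(\R)$, yet $f[-1,-1+h]=h^2-3h+3$ decreases on $(0,3/2)$ and increases afterwards. So the existence of $f'_\pm$ is not established.
\item ``Locally bounded, hence locally Lipschitz'' does not follow from the monotonicity of one such one-parameter family.
\item The claim that $f'_+$ and $f'_-$ agree off a countable set ``by monotonicity'' has no basis when $p\geq 1$. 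There $f'_\pm$ are only $p$-convex, not monotone.
\item A small slip: $p$-convexity requires nonnegative divided differences on $p+2$ nodes, not $p+1$.
\end{itemize}

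The missing idea is to drop one order. Fix distinct $y_1,\ldots,y_p$ and set $\varphi(x)=f[y_1,\ldots,y_p,x]$. Then $\varphi[x_0,x_1,x_2]=f[y_1,\ldots,y_p,x_0,x_1,x_2]\geq 0$. So $\varphi$ is convex on each component of $I\setminus\{y_1,\ldots,y_p\}$, hence continuous with one-sided derivatives there.

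Newton's formula $f(x)=N(x)+\varphi(x)\prod_{i=1}^p(x-y_i)$, with $N$ a polynomial, transfers continuity and one-sided differentiability to $f$ off $\{y_1,\ldots,y_p\}$. Since the $y_i$ are arbitrary, this holds everywhere.

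Your limit argument then shows that $f'_+$ is $p$-convex, hence continuous by the induction hypothesis. A continuous function with a continuous right derivative is $\cC^1$. For instance, with $c\in I$ fixed, $x\mapsto f(x)-\int_c^x f'_+(t)\,dt$ is continuous with vanishing right derivative, hence constant. Therefore $f'=f'_+\in\cK^p_1(I)\subset\cC^{p-1}(I)$, so $f\in\cC^p(I)$, and there is no need to compare $f'_+$ with $f'_-$.
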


\begin{proposition}\label{prop:ffprime324}
Let $f\colon\R_+\to\R$ be a function and let $p\in\N$. Then the following assertions hold:
\begin{itemize}
\item If $f\in\cD^{p+1}\cap\cK^{p+1}_1$, then $\Delta f\in\cD^p\cap\cK^p_1$
\item If $f$ is differentiable, then $f\in\cD^{p+1}\cap\cK^{p+1}_1$ if and only if $f'\in\cD^p\cap\cK^p_1$.
\end{itemize}
\end{proposition}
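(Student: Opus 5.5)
The plan is to prove the two assertions separately. The convexity part of each will be reduced to Proposition~\ref{prop:ffprime32}, applied on a right-unbounded interval $(a,\infty)$ on which the relevant eventual property holds. The asymptotic part ($\cD$) will come from a sandwich argument based on monotonicity of $\Delta^p$.

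\emph{First assertion.} The $\cD$ part is immediate: $\Delta^p(\Delta f)(n)=\Delta^{p+1}f(n)\to 0$, so $\Delta f\in\cD^p$. For convexity, choose $a>0$ such that $f|_{(a,\infty)}\in\cK^{p+1}_1((a,\infty))$. The second item of Proposition~\ref{prop:ffprime32} applies because $(a,\infty)$ is right-unbounded. It gives that $\Delta f$ restricted to $(a,\infty)$ is $p$-convex, that is, $\Delta f\in\cK^p_1$.

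\emph{Second assertion, convexity.} Let $f$ be differentiable and fix $a>0$. The third item of Proposition~\ref{prop:ffprime32} on $I=(a,\infty)$ gives $f\in\cK^{p+1}_1(I)$ if and only if $f'\in\cK^p_1(I)$. Taking $a$ large enough, this yields $f\in\cK^{p+1}_1$ if and only if $f'\in\cK^p_1$. So it remains to show, under these equivalent convexity conditions, that $\Delta^{p+1}f(n)\to0$ if and only if $\Delta^pf'(n)\to0$.

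\emph{Second assertion, the sequences.} I will first record that $\Delta^p f'$ is eventually increasing. For $p=0$ this is just $f'\in\cK^0_1$. For $p\geq1$, apply the second item of Proposition~\ref{prop:ffprime32} $p$ times to $f'\in\cK^p_1((a,\infty))$, passing from $\cK^p_1$ to $\cK^{p-1}_1$ and down to $\cK^0_1$.

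Next I will use
$$
\Delta f(x)=\int_0^1 f'(x+t)\,dt\qquad (x>a).
$$
This holds because $f$ is differentiable and $f'$ is monotone on $(a,\infty)$: the case $p=0$ is given directly, and for $p\geq 1$ the second item of Proposition~\ref{prop:ffprime32} lowers $f'\in\cK^p_1$ to $\cK^0_1$. Hence $f'$ is locally bounded and Riemann integrable, and the fundamental theorem of calculus applies. Since $\Delta$ commutes with the integral in $t$, iterating gives
$$
\Delta^{p+1}f(x)=\int_0^1\Delta^pf'(x+t)\,dt .
$$
Because $\Delta^pf'$ is eventually increasing, for all large $n$ this yields the sandwich
$$
\Delta^pf'(n)\le\Delta^{p+1}f(n)\le\Delta^pf'(n+1).
$$
If $\Delta^pf'(n)\to0$, the sandwich forces $\Delta^{p+1}f(n)\to0$. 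Conversely, the same sandwich at $n-1$ gives
$$
\Delta^{p+1}f(n-1)\le\Delta^pf'(n)\le\Delta^{p+1}f(n),
$$
so $\Delta^{p+1}f(n)\to0$ implies $\Delta^pf'(n)\to0$.

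The only delicate point is justifying the integral formula for a merely differentiable $f$. Monotonicity of $f'$ on $(a,\infty)$ is exactly what makes it work. If the integral route is unwanted, an alternative is the mean value theorem for divided differences, but the integral route is cleaner.
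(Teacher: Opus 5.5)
The paper gives no proof of this proposition; it takes it from \cite[Sections 2.2 and 4.2]{MarZen22}, so there is no in-paper argument to compare with. Your proof is a correct, self-contained derivation from Proposition~\ref{prop:ffprime32}, apart from one misjustified step that is easily repaired.

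\begin{itemize}
\item The first assertion, and the convexity half of the second, are exactly the second and third items of Proposition~\ref{prop:ffprime32} applied on a tail $(a,\infty)$.
\item The asymptotic half follows from the sandwich $\Delta^pf'(n)\le\Delta^{p+1}f(n)\le\Delta^pf'(n+1)$ and its shift to $n-1$.
\item That sandwich rests on the eventual monotonicity of $\Delta^pf'$, which you obtain correctly by applying the second item $p$ times to $f'$.
\end{itemize}

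The misjustified step is your claim that, for $p\ge 1$, the second item of Proposition~\ref{prop:ffprime32} ``lowers $f'\in\cK^p_1$ to $\cK^0_1$.'' That item lowers $\Delta^kf'$, not $f'$ itself. In fact $f'$ need not be eventually increasing: with $f=\ln$ and $p=1$, $f'(x)=1/x$ lies in $\cK^1_1$ but is decreasing. The conclusion you actually need is only that $f'$ is Riemann integrable on $[x,x+1]$, so the fundamental theorem of calculus applies. That does follow from the same proposition, via its first item: $f\in\cK^{p+1}(I)\subset\cC^p(I)\subset\cC^1(I)$ for $p\ge 1$, so $f'$ is continuous on $I=(a,\infty)$. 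For $p=0$, monotonicity of $f'$ is given directly, as you say.

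Cleaner still is the alternative you mention at the end, which avoids integrability altogether. The function $h=\Delta^pf$ is differentiable on $\R_+$ with $h'=\Delta^pf'$. The mean value theorem then gives $\Delta^{p+1}f(n)=h(n+1)-h(n)=\Delta^pf'(\xi_n)$ for some $\xi_n\in(n,n+1)$. Since $\Delta^pf'$ is increasing on $(a,\infty)$, this yields the same sandwich for all $n>a$ with no further hypotheses.

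With either fix, the rest of your argument goes through unchanged.
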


It is also important to recall \cite[Proposition 4.7]{MarZen22} that the classes $\cD^p$ and $\cK^p$ are nested in opposite directions:
$$
\cD^0\,\subset\,\cD^1\,\subset\,\cD^2\,\subset\,\cdots\, ,\qquad \cK^{-1}\,\supset\,\cK^0 \,\supset\,\cK^1\,\supset\,\cdots\, .
$$
These inclusions naturally lead us to introduce the following limiting sets:
$$
\cD^{\infty} ~=~ \lim_{p\to\infty}\cD^p\, ,\qquad\cK^{\infty} ~=~ \lim_{p\to\infty}\cK^p.
$$

Now, when defining the iterates of the map $\Sigma$, the first question that naturally arises is how to determine the domain of $\Sigma^{m+1}$ for any $m\in\N$. In this regard, we have the following proposition.

\begin{proposition}\label{prop:338-domSm}
Let $g\colon\R_+\to\R$ and let $m\in\N$. The following assertions are equivalent:
\begin{itemize}
\item[(i)] $g\in\dom\Sigma^{m+1}$.
\item[(ii)] $g\in\dom\Sigma^{m}$ and $\Sigma^m g\in\dom\Sigma$.
\item[(iii)] There exists $p\in\N$ such that
\begin{itemize}
    \item[$\bullet$] $g\in\cD^p\cap\cK^p$ \hspace{1ex}(hence $g\in\dom\Sigma$),
    \item[$\bullet$] $\Sigma g\in\cD^{p+1}\cap\cK^{p+1}$ \hspace{1ex}(hence $\Sigma g\in\dom\Sigma$),
    \item[$\bullet$] $\cdots$
    \item[$\bullet$] $\Sigma^m g\in\cD^{p+m}\cap\cK^{p+m}$  \hspace{1ex}(hence $\Sigma^m g\in\dom\Sigma$).
\end{itemize}
\end{itemize}
\end{proposition}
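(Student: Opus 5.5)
The equivalence (i)$\Leftrightarrow$(ii) should be immediate from the definition $\Sigma^{m+1}=\Sigma\circ\Sigma^m$. The domain of a composition of partial maps is $\{g\in\dom\Sigma^m : \Sigma^m g\in\dom\Sigma\}$, and I would just state this. The implication (iii)$\Rightarrow$(ii) is also straightforward. By \eqref{eq:domS11}, each condition $\Sigma^k g\in\cD^{p+k}\cap\cK^{p+k}$ gives $\Sigma^k g\in\dom\Sigma$. An induction on $k=0,\ldots,m$ then shows $g\in\dom\Sigma^{k+1}$, and in particular $g\in\dom\Sigma^m$ with $\Sigma^m g\in\dom\Sigma$.

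The real content is (ii)$\Rightarrow$(iii). The difficulty is that (ii) only provides, for each $k\in\{0,\ldots,m\}$, some $p_k\in\N$ with $\Sigma^k g\in\cD^{p_k}\cap\cK^{p_k}$. Statement (iii) instead requires a single $p$ whose exponents increase by exactly one at each level. Since the $\cD$-classes grow with the exponent while the $\cK$-classes shrink, we cannot simply enlarge the $p_k$. My plan is to go down from the top level using Proposition~\ref{prop:ffprime324} and the identity $\Delta\Sigma^{k}g=\Sigma^{k-1}g$, which holds because $\Sigma h$ solves $\Delta f=h$. If $\Sigma^m g\in\cD^{q}\cap\cK^{q}_1$ with $q\geq 1$, Proposition~\ref{prop:ffprime324} gives $\Sigma^{m-1}g=\Delta\Sigma^m g\in\cD^{q-1}\cap\cK^{q-1}_1$. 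The $\cK_{-1}$ case follows by passing to $-\Sigma^m g$, using the linearity of $\Delta$ and $\Sigma$. Iterating, $\Sigma^k g\in\cD^{q-(m-k)}\cap\cK^{q-(m-k)}$ for all $k$, as long as $q\geq m$. Hence, if $p_m\geq m$, the choice $p=p_m-m$ works.

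The main obstacle is the case $p_m<m$. There we must raise the convexity order of $\Sigma^m g$ from $p_m$ to $m$: we want $\Sigma^m g\in\cK^m$, while $\Sigma^m g\in\cD^m$ is automatic by nesting. I would handle this by induction on $m$. Assume (iii) holds at level $m-1$ with some $p'$, and choose $p'$ as large as the top-level information permits, which may require first applying the downward argument above. Then $\Sigma^{m-1}g\in\cD^{p'+m-1}\cap\cK^{p'+m-1}$. From the limit representation \eqref{eq:lim1}--\eqref{eq:fnp2} (as recalled after Definition~\ref{de:pconvpconc}), $\Sigma^m g\in\cK^{p'+m-1}$. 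Moreover $\Delta^{p'+m}\Sigma^m g=\Delta^{p'+m-1}\Sigma^{m-1}g\to 0$, so $\Sigma^m g\in\cD^{p'+m}$.

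What remains is to upgrade $\Sigma^m g$ from $\cK^{p'+m-1}$ to $\cK^{p'+m}$. Here I would combine the hypothesis $\Sigma^m g\in\dom\Sigma$, that is, $\Sigma^m g\in\cK^{p_m}$, with a minimality choice. Either $p_m\geq p'+m$, in which case nesting concludes. Or $p_m<p'+m$, in which case I would lower $p'$ so that $p'+m=p_m$, which is possible when $p_m\geq m$ by the downward argument, and then check consistency at the lower levels. I expect the residual subcase $p_m<m$ to need the extra fact from \cite[Chapter 5]{MarZen22} that $\Sigma$ raises the convexity order under eventual convexity of the difference. This is the step I would scrutinize first, if necessary by working directly with $\Delta^{p'+m}$ applied to the sequence $f_n^{p}$.
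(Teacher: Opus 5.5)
\textbf{There is a genuine gap, and it cannot be closed: in the residual case the statement itself fails.}

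Your treatment of (i)$\Leftrightarrow$(ii) and (iii)$\Rightarrow$(ii) is fine. Your downward argument is also correct, and it differs from the paper's route. From $\Delta\Sigma^k g=\Sigma^{k-1}g$ and Proposition~\ref{prop:ffprime324} you get the following: for $g\in\dom\Sigma^m$, (iii) holds as soon as $\Sigma^m g\in\cD^q\cap\cK^q$ for some $q\geq m$ (take $p=q-m$). Since (iii) trivially supplies such a $q$ (namely $p+m$), this is an exact reformulation of (iii).

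The gap is the residual case, where every $q$ with $\Sigma^m g\in\cD^q\cap\cK^q$ satisfies $q<m$. Your ``lower $p'$'' step only re-runs the downward argument. The hoped-for fact that $\Sigma$ raises the convexity order is not available. By your own reformulation, the residual case is precisely where (ii) holds and (iii) fails, and it does occur. Take $m=1$, $\phi(x)=2+\tfrac{1}{10}\sin(2\pi x)$, and
\[
f(x)=2^{-x}\phi(x)-3^{1-x},\qquad g(x)=\Delta f(x)=2\cdot 3^{-x}-2^{-x-1}\phi(x).
\]
\begin{itemize}
\item We have $f(1)=0$, and both $f$ and $g$ tend to $0$.
\item Since $\ln 2\,\phi-\phi'\geq\tfrac{19}{10}\ln 2-\tfrac{\pi}{5}>0$, the $2^{-x}$-terms dominate in $f'$ and $g'$. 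So $f$ is eventually decreasing and $g$ is eventually increasing.
\item Thus $g\in\cD^0\cap\cK^0$, and Proposition~\ref{prop:33-a7asd6f5Char} with $p=0$ gives $f=\Sigma g\in\cD^0\cap\cK^0\subset\dom\Sigma$. Hence $g\in\dom\Sigma^2$, so (i) holds.
\item At $x=k+\tfrac14$ and $x=k+\tfrac34$ with $k$ large, $f''(x)$ has the sign of $(\ln 2)^2\phi-2\ln 2\,\phi'+\phi''$. This equals $\tfrac{21}{10}(\ln 2)^2-\tfrac{2}{5}\pi^2<0$ at the first point and $\tfrac{19}{10}(\ln 2)^2+\tfrac{2}{5}\pi^2>0$ at the second.
\item Hence $\Sigma g\notin\cK^1\supset\cK^{p+1}$ for every $p\in\N$, and (iii) fails.
\end{itemize}

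The paper proceeds upward instead. It takes $p$ minimal with $g\in\cD^p\cap\cK^p$ and asserts $\Sigma g\notin\cD^p$. It deduces that every admissible index of $\Sigma g$ is at least $p+1$, hence $\Sigma g\in\cD^{p+1}\cap\cK^{p+1}$ with $p+1$ again minimal, and iterates. For $p\geq 1$ the assertion is valid, because $\Sigma g\in\cD^p$ would give $g=\Delta\Sigma g\in\cD^{p-1}\cap\cK^{p-1}$, contradicting minimality. This minimality idea is what your plan lacks: it shows your residual case cannot occur when $g\notin\cD^0\cap\cK^0$. For $p=0$, however, $\Sigma g\in\cD^0$ just means $\sum_{k\geq 1}g(k)=0$ by \eqref{eq:Gau1}, and the example above satisfies this. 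So the paper's proof breaks at exactly the point where yours stalls.

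As stated, (i)$\Rightarrow$(iii) is false. The paper's argument does prove it under the extra hypothesis $g\notin\cD^0\cap\cK^0$, and the converse direction (iii)$\Rightarrow$(i) is unaffected.
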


\begin{proof}
It is evident that the assertions (i) and (ii) are equivalent. Moreover, we can readily see that assertion (iii) implies assertion (i). Let us now show that assertion (i) implies assertion (iii). Clearly, we can assume that $m\geq 1$.

Let $g\in\dom\Sigma^{m+1}$. In particular, we have $g\in\dom\Sigma$, and hence $g\in\cD^p\cap\cK^p$ for some $p\in\N$. Let $p$ be the smallest integer with this property; thus $\Sigma g\notin\cD^p$.

On the other hand, since $\Sigma g\in\dom\Sigma$, there exists an integer $q\geq p+1$ such that
$$
\Sigma g ~\in ~ \cD^q\cap\cK^q.
$$
Because $\Sigma g\in\cD^{p+1}$ and $\cK^q\subset\cK^{p+1}$, it follows that
$$
\Sigma g ~\in ~ \cD^{p+1}\cap\cK^{p+1}.
$$
This argument can be easily iterated.
\end{proof}

The equivalence between assertions (i) and (iii) in Proposition~\ref{prop:338-domSm} yields an explicit description of the domain of $\Sigma^{m+1}$ for any $m\in\N$, namely:
$$
\dom\Sigma^{m+1} ~=~ \bigcup_{p\geq 0}~\bigcap_{k=0}^m ~ (\Sigma^k)^{-1}\big(\cD^{p+k}\cap\cK^{p+k}\big).
$$
When $m=0$, we immediately retrieve the description of $\dom\Sigma$ given in \eqref{eq:domS11}.

Although explicit, this characterization of $\dom\Sigma^{m+1}$ relies on preimages under the map $\Sigma^k$ and is therefore not particularly convenient for determining whether a given function $g\colon\R_+\to\R$ lies in this domain. This motivates the search for simple (sufficient) conditions for membership in $\dom\Sigma^{m+1}$. In the following proposition, we show that one such condition can be obtained from the following inclusion:
\begin{equation}\label{eq:SuffCondDomKm}
\bigcup_{p\geq 0}(\cD^p\cap\cK^{p+m})~\subset ~ \dom\Sigma^{m+1}.
\end{equation}

\begin{proposition}\label{prop:SuffCondDomKm44}
Let $m\in\N$. If $g$ lies in $\cD^p\cap\cK^{p+m}$ for some $p\in\N$, then $\Sigma^{m+1}g$ exists and lies in $\cD^{p+m+1}\cap\cK^{p+m}$.
\end{proposition}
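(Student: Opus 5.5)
I would argue by induction on $m$, relying on the following one-step lemma: \emph{if $g\in\cD^q\cap\cK^{q+r}$ for some $q,r\in\N$, then $\Sigma g$ exists and lies in $\cD^{q+1}\cap\cK^{q+r}$.} Granting the lemma, the proposition follows by applying it $m+1$ times. First, since $\cK^{p+m}\subset\cK^p$, $g$ lies in $\dom\Sigma$, and the lemma with $(q,r)=(p,m)$ gives $\Sigma g\in\cD^{p+1}\cap\cK^{p+m}=\cD^{p+1}\cap\cK^{(p+1)+(m-1)}$. Then, for $k=1,\dots,m$, the lemma with $(q,r)=(p+k,m-k)$ gives $\Sigma^{k+1}g\in\cD^{p+k+1}\cap\cK^{p+m}$. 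At $k=m$ this is exactly the claim $\Sigma^{m+1}g\in\cD^{p+m+1}\cap\cK^{p+m}$. The consistency condition $\Sigma^k g\in\cD^{p+k}\cap\cK^{p+k}$ required in Proposition~\ref{prop:338-domSm}(iii) also holds along the way, because $\cK^{p+m}\subset\cK^{p+k}$ for $k\le m$.

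To prove the lemma, I would assume without loss of generality that $g\in\cK^{q+r}_1$; otherwise replace $g$ by $-g$, using that $\Sigma$ is linear. Since $g\in\cD^q\cap\cK^q$, the function $\Sigma g$ exists, and $\Delta\Sigma g=g$.

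\textbf{Convexity.} I would show that $\Sigma g$ is eventually $(q+r)$-concave by reading it off the limit \eqref{eq:lim1}, but using the approximants $f_n^{q+r}[g]$ rather than $f_n^q[g]$. The key fact is that for $g\in\cD^q$ the extra terms $\binom{x}{j}\Delta^{j-1}g(n)$ with $j>q$ tend to $0$, since $\Delta^{j-1}g(n)\to0$ for $j-1\ge q$ (the classes $\cD^q$ are nested). Hence $\Sigma g=\lim_n f_n^{q+r}[g]$ as well. On an interval $[a,\infty)$ where $g$ is $(q+r)$-convex, each $f_n^{q+r}[g]$ equals a polynomial of degree at most $q+r$ minus the sum $\sum_{k}g(x+k)$ of $(q+r)$-convex functions. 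It is therefore $(q+r)$-concave there. Divided differences pass to pointwise limits, so $\Sigma g$ is eventually $(q+r)$-concave. (Alternatively, I could cite the remark after Definition~\ref{de:pconvpconc} applied with $p=q+r$; that remark needs $g\in\cD^{q+r}$, which holds since $\cD^q\subset\cD^{q+r}$.)

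\textbf{Asymptotics.} For the $\cD^{q+1}$ part, the identity $\Delta^{q+1}\Sigma g=\Delta^q g$ together with $\Delta^q g(n)\to0$ gives $\Sigma g\in\cD^{q+1}$ directly.

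I therefore do not expect a real obstacle. The only delicate point is the convexity step: one must justify that raising $p$ from $q$ to $q+r$ in \eqref{eq:fnp2} does not change the limit, and that eventual $(q+r)$-convexity of $g$ transfers to the approximants on a common right half-line.
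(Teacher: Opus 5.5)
Your proposal is correct and follows essentially the same route as the paper, which verifies assertion (iii) of Proposition~\ref{prop:338-domSm} after noting that $g\in\cD^{p+k}\cap\cK^{p+k}$ for $k=0,\ldots,m$. Your one-step lemma, that $\Sigma$ maps $\cD^q\cap\cK^{q+r}$ into $\cD^{q+1}\cap\cK^{q+r}$ (via $\Delta^{q+1}\Sigma g=\Delta^q g$ and the limit of the approximants $f_n^{q+r}[g]$), makes explicit the step the paper calls immediate: it is the iterates $\Sigma^k g$, not just $g$, that must lie in the required classes, including the final membership $\Sigma^{m+1}g\in\cD^{p+m+1}\cap\cK^{p+m}$.
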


\begin{proof}
Suppose that $g$ lies in $\cD^p\cap\cK^{p+m}$ for some $p\in\N$. Then $g\in\cD^{p+k}\cap\cK^{p+k}$ for each $k=0,\ldots,m$. It follows immediately that assertion (iii) of Proposition~\ref{prop:338-domSm} is satisfied, which establishes the claim.
\end{proof}

Throughout this paper, we will frequently make use of the inclusion given in \eqref{eq:SuffCondDomKm} to ensure that a given function $g\colon\R_+\to\R$ lies in $\dom\Sigma^{m+1}$. We also note that establishing or refuting the converse inclusion constitutes an interesting open problem.

We now turn to a prominent example obtained by iterated application of the map $\Sigma$ to the logarithm function. We begin by recalling, via a proposition, the definition of the \emph{multiple gamma functions} when restricted to the domain $\R_+$. These functions form a sequence $m\mapsto G_m$ ($m\in\N$) in $\cC^{\infty}(\R_+)$, where $G_1=\Gamma$ is the Euler gamma function and $G_2=G$ is the Barnes $G$-function. For background, see, for example, Srivastava and Choi \cite[pp.\ 56--57]{SriCho12}.

\begin{proposition}[Multiple Gamma Functions]\label{prop:MulGamFuct4}
There exists an infinite sequence of functions $m\mapsto G_m$ ($m\in\N$) in $\cC^{\infty}(\R_+)$ that is uniquely determined by the following conditions:
\begin{itemize}
\item $G_0(x)=x$ for all $x>0$;
\item $G_{m+1}(x+1)=G_m(x){\,}G_{m+1}(x)$ for all $m\in\N$ and $x>0$;
\item $G_m(1)=1$ for all $m\in\N$;
\item The function $D^m(\ln\circ\, G_m)$ is increasing for all $m\in\N$.
\end{itemize}
\end{proposition}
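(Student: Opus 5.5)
We prove existence by building the sequence with the map $\Sigma$, and uniqueness by induction on $m$ using the uniqueness part of the generalized Bohr--Mollerup theorem \cite[Chapter 5]{MarZen22}. We work with $h_m=\ln\circ\, G_m$. The conditions then read $h_0=\ln$, $\Delta h_{m+1}=h_m$, $h_m(1)=0$, and $D^m h_m$ increasing. The key observation is that $\ln\in\cD^1\cap\cK^{\infty}$; indeed $\ln$ lies in $\cD^1$, is $\cC^\infty$, and its derivatives alternate in sign, so $\ln\in\cK^q$ for every $q$. Hence $\ln\in\cD^1\cap\cK^{1+m}$ for every $m$. By Proposition~\ref{prop:SuffCondDomKm44}, $\Sigma^{m}\ln$ exists for all $m$ and lies in $\cD^{m+1}\cap\cK^{m}$; a sharper bookkeeping via Proposition~\ref{prop:338-domSm} should also give $\Sigma^m\ln\in\cK^{m+1}$.

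\emph{Existence.} Set $h_m=\Sigma^m\ln$ and $G_m=\exp\circ\, h_m$. Then $\Delta h_{m+1}=h_m$ and $h_{m+1}(1)=0$ by the definition of $\Sigma$. For $m=0$ we have $G_0(x)=x$, and $G_m(1)=1$ for $m\geq 1$. For smoothness, each $h_m$ lies in $\cK^q$ for all $q$: apply Proposition~\ref{prop:SuffCondDomKm44} with $m$ replaced by larger $m'$, noting that $\Sigma^{m'}$ restricted to $\Sigma^m$ yields the same function. So Proposition~\ref{prop:ffprime32} gives $h_m\in\cC^\infty(\R_+)$.

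For monotonicity of $D^m h_m$, we track convexity signs. The function $\ln$ is $q$-concave for odd $q$ and $q$-convex for even $q$, and each application of $\Sigma$ flips the sign: if $g\in\cD^p\cap\cK^p_1$ then $\Sigma g\in\cK^p_{-1}$. From this we get that $h_m$ is eventually $(m+1)$-convex. Since $h_m$ is $\cC^\infty$, this means $D^{m+2}h_m\geq 0$ eventually; equivalently, $D^{m+1}h_m$ is eventually increasing. That is not yet the required statement, so we aim at something better. Differentiating $\Delta h_{m+1}=h_m$ gives $\Delta D^m h_m = D^m h_{m-1}$. By induction $D^m h_m$ is, up to constants, a principal indefinite sum of the form $\Sigma$ applied to $D^{m-1}h_{m-1}$. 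Starting from $D^0h_0=\ln$ increasing, we have $D^1 h_1=\psi$, the digamma function, which is increasing. Using the series representation obtained from \eqref{eq:lim1} with the derivative taken termwise (justified by the convergence results for derivatives of $\Sigma g$ in \cite{MarZen22}), we get $D^{m+1}h_m=\sum$ of terms of fixed sign on all of $\R_+$. This yields global rather than eventual monotonicity. This step is the main obstacle: converting eventual convexity into monotonicity of $D^mh_m$ on the whole of $\R_+$. I would first try differentiating \eqref{eq:fnp2} $m+1$ times: the polynomial part dies and the remaining tail sums have constant sign.

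\emph{Uniqueness.} Suppose $(G_m')$ also satisfies the conditions, and set $h'_m=\ln\circ\, G'_m$, so $h'_0=h_0$. Assume $h'_m=h_m$. Then $h'_{m+1}$ solves $\Delta f=h_m$ with $f(1)=0$. Since $D^{m+1}h'_{m+1}$ is increasing, $h'_{m+1}$ is $(m+1)$-convex by iterating the third item of Proposition~\ref{prop:ffprime32}, so $h'_{m+1}\in\cK^{m+1}$. Because $h_m\in\cD^{m+1}\cap\cK^{m+1}$, the uniqueness in the generalized Bohr--Mollerup theorem forces $h'_{m+1}=\Sigma h_m=h_{m+1}$, which completes the induction.
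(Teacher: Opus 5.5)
Your uniqueness step is correct. It is the same induction the paper runs in Proposition~\ref{prop:MulGamFuct5} to show $\Sigma\ln G_m=\ln G_{m+1}$; the paper does not prove the present proposition itself but quotes it from Srivastava--Choi. The gap is in existence. You never establish that $D^m h_m$ is increasing on all of $\R_+$, and your sketch contains errors.

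\textbf{The convexity bookkeeping.} The function $h_m=\Sigma^m\ln$ is eventually $(m+1)$-\emph{concave}, not convex: already $D^2\ln<0$ and $D^3\ln\Gamma=\psi''<0$. So $D^{m+1}h_m$ is eventually decreasing. The relevant order is $q=m$. At order $m$, $\ln$ has sign $(-1)^m$. Each of the $m$ applications of $\Sigma$ flips this sign, which is legitimate since $h_k\in\cD^{k+1}\subset\cD^m$ for $k<m$. Hence $h_m$ is eventually $m$-convex, which is exactly the required property, up to ``eventually.''

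\textbf{The recursion.} It is misindexed. Differentiating $\Delta h_m=h_{m-1}$ gives $D^mh_m=c+\Sigma D^mh_{m-1}$, not $\Sigma$ applied to $D^{m-1}h_{m-1}$. So monotonicity of $D^mh_m$, or a fixed sign for your termwise series, needs $h_{m-1}$ to be $m$-concave on all of $\R_+$. Your hypothesis ``$D^{m-1}h_{m-1}$ increasing'' does not supply this, so the induction does not close as stated.

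\textbf{The missing idea.} The sign flip under $\Sigma$ is global, not merely eventual, and it needs no differentiation. Suppose $g\in\cD^p$ is $q$-convex on all of $\R_+$ with $q\geq p$.
\begin{itemize}
\item Then $g\in\cD^q\cap\cK^q$, so $\Sigma g=\lim_n f_n^q[g]$.
\item Each $f_n^q[g]$ from \eqref{eq:fnp2} is $q$-concave on all of $\R_+$: the term $-\sum_{k<n}g(x+k)$ is $q$-concave there, and the polynomial part has degree at most $q$.
\item The inequality $f[x_0,\ldots,x_{q+1}]\leq 0$ passes to the pointwise limit.
\end{itemize}

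With this, carry the stronger hypothesis $h_m\in\cD^{m+1}\cap\cK^{m+n}_{(-1)^n}(\R_+)$ for all $n\in\N$. The base case is $D^{n+1}\ln x=(-1)^n n!/x^{n+1}$. The step uses the hypothesis at $n+1$, with $q=m+n+1\geq m+1$. The induction then closes; this is precisely the content of Proposition~\ref{prop:MulGamFuct5}.

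Taking $n=0$ gives the fourth condition on all of $\R_+$. Membership $h_m\in\cK^q(\R_+)$ for every $q$ gives $h_m\in\cC^\infty(\R_+)$ by the first item of Proposition~\ref{prop:ffprime32}.

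\textbf{Smoothness as you wrote it.} You apply that item to the eventual classes $\cK^q$. Also, Proposition~\ref{prop:SuffCondDomKm44} with a larger $m'$ only controls the last iterate. Together this yields smoothness only on tails $(a_q,\infty)$, unless you transport it back via $h_m(x)=h_m(x+N)-\sum_{k<N}h_{m-1}(x+k)$.
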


The following result provides an explicit expression for repeated principal indefinite sums of the logarithm function in terms of multiple gamma functions; see also \cite[Section 12.1]{MarZen22}.

\begin{proposition}\label{prop:MulGamFuct5}
For any $m,n\in\N$, the function $\ln\circ\, G_m$ lies in
\begin{equation}\label{eq:IndMulrr0}
\cD^{m+1}\cap\cK^{m+n}_{(-1)^n}(\R_+)
\end{equation}
and we have
\begin{equation}\label{eq:IndMulrr1}
\Sigma_x\, \ln G_m(x) ~=~ \ln G_{m+1}(x)\qquad (x>0).
\end{equation}
In particular, the logarithm function lies in $\dom\Sigma^{m+1}$ and we have
$$
\Sigma^{m+1}_x\ln x ~=~ \ln G_{m+1}(x)\qquad (x>0).
$$
\end{proposition}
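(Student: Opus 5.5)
The plan is to prove both claims together by induction on $m$, using the characterization of $G_m$ in Proposition~\ref{prop:MulGamFuct4} and the uniqueness part of the generalized Bohr--Mollerup theorem recalled in the introduction.

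\emph{Regularity.} Set $h_m=\ln\circ\, G_m$. The functional equation gives $\Delta h_{m+1}=h_m$ and $h_0=\ln$. I will show that $D^{m+1}h_m$ is a completely monotone type function. More precisely, I will show that $(-1)^{n} D^{m+n}h_m$ is positive and decreasing, or at least has the sign pattern yielding $h_m\in\cK^{m+n}_{(-1)^n}(\R_+)$ for every $n$, and that $D^{m+1}h_m(x)\to 0$ as $x\to\infty$.

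For $m=0$ this is classical: $D^{1+k}\ln x=(-1)^k k!/x^{k+1}$. Thus $\ln\in\cK^{n}_{(-1)^{n+1}}$, which matches \eqref{eq:IndMulrr0} with $m=0$ once one checks the index convention, and $\ln\in\cD^1$.

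For the inductive step, I use the condition that $D^{m+1}h_{m+1}$ is increasing. Differentiating $\Delta h_{m+1}=h_m$ gives $\Delta D^{m+1}h_{m+1}=D^{m+1}h_m$. The function $\phi=D^{m+1}h_{m+1}$ is monotone, and $\Delta\phi=D^{m+1}h_m$ tends to $0$ and lies in $\cK^\infty$ by the induction hypothesis. By the uniqueness theorem with $p=0$, $\phi$ is therefore the principal solution. Its higher-order convexity then follows from the remark that $g\in\cD^p\cap\cK^p_1$ implies $\Sigma g\in\cK^p_{-1}$, applied for each $p$. Transferring back from $\phi$ to $h_{m+1}$ uses the third bullet of Proposition~\ref{prop:ffprime32}. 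The $\cD^{m+2}$ membership follows from Proposition~\ref{prop:ffprime324}, using the differentiable characterization, since $D^{m+1}h_{m+1}$ behaves like $D^{m+1}h_m$ summed.

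\emph{Identity \eqref{eq:IndMulrr1}.} Once we know $h_m\in\cD^{m+1}\cap\cK^{m+1}$, the function $h_{m+1}$ solves $\Delta f=h_m$, lies in $\cK^{m+1}$ (taking $n=0$ in the regularity claim for $h_{m+1}$), and vanishes at $1$. Uniqueness then forces $h_{m+1}=\Sigma h_m$.

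\emph{The last claim.} This follows by iterating: $\Sigma^{m+1}\ln=\Sigma^{m+1}h_0=h_{m+1}$. Membership in $\dom\Sigma^{m+1}$ follows from Proposition~\ref{prop:338-domSm}(iii) with $p=1$, since $h_k\in\cD^{k+1}\cap\cK^{k+1}$ for each $k$.

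The main obstacle is the regularity step: establishing $h_m\in\cD^{m+1}$ and the exact sign $(-1)^n$ for every $n$ from the bare axioms. My first attempt will be to express $D^{m+1}h_m$ as an iterated principal sum starting from $D\ln x=1/x$, namely $D^{m+1}h_m=\Sigma^m(1/x)$ up to polynomial corrections. I will then read off the convexity signs from the explicit Gauss-type limits \eqref{eq:fnp2}, where each summation flips the convexity sign.
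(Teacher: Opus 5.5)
Your argument is correct in its main line (with $h_m=\ln\circ\,G_m$ as in your notation), but the regularity part follows a different route from the paper.

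\textbf{The paper's route.} It identifies first. By the induction hypothesis $h_m\in\cD^{m+1}\cap\cK^{m+1}$. The last axiom together with Proposition~\ref{prop:ffprime32} already gives $h_{m+1}\in\cK^{m+1}_1(\R_+)$, so uniqueness yields $h_{m+1}=\Sigma h_m$ immediately. The rest of the regularity of $h_{m+1}$ is then inherited from $h_m$: we have $h_m\in\cD^{m+n+1}\cap\cK^{m+n+1}_{(-1)^{n+1}}(\R_+)$ for every $n$, and $\Sigma$ flips the convexity sign. Membership $\Sigma h_m\in\cD^{m+2}$ is automatic from $\Delta\Sigma h_m=h_m$.

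\textbf{Your route.} You work at the derivative level. You identify $\phi=D^{m+1}h_{m+1}$ with $\phi(1)+\Sigma(D^{m+1}h_m)$ using uniqueness with $p=0$. You then flip the signs of the completely monotone function $D^{m+1}h_m$ and transfer back to $h_{m+1}$ with Propositions~\ref{prop:ffprime32} and~\ref{prop:ffprime324}. Only afterwards do you identify $h_{m+1}=\Sigma h_m$ by a second uniqueness argument. This is valid. As a by-product it shows $D^{m+1}\Sigma h_m=\Sigma D^{m+1}h_m+c$, which makes explicit the complete monotonicity the paper only notes in the remark after the proof. The cost is a second uniqueness argument plus the back-transfer. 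Your identification step uses only the case $n=0$ for $h_{m+1}$, which is the axiom itself, so you could have identified first and skipped the detour through $\phi$.

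\textbf{Points to clean up.}
\begin{itemize}
\item The sign bookkeeping in the base case is off. Your own formula $D^{n+1}\ln x=(-1)^n n!/x^{n+1}$ gives $\ln\in\cK^n_{(-1)^n}(\R_+)$, which is exactly \eqref{eq:IndMulrr0} for $m=0$. It does not give $\cK^n_{(-1)^{n+1}}$, which would make $\ln$ decreasing.
\item The remark that $\Sigma$ maps $\cD^p\cap\cK^p_1$ into $\cK^p_{-1}$ only gives \emph{eventual} convexity, whereas the statement is on all of $\R_+$. You need the global version, obtained by taking divided differences in \eqref{eq:fnp2}. The polynomial part is annihilated, and $x\mapsto-\sum_{k}g(x+k)$ has divided differences of the opposite sign on all of $\R_+$. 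The paper uses this implicitly as well.
\item Membership in $\cD^{m+2}$ needs no machinery: $\Delta^{m+2}h_{m+1}=\Delta^{m+1}h_m$, which tends to $0$ at the integers.
\item Drop your last paragraph. The ``main obstacle'' is already settled by your $\phi$-argument. The proposed identity $D^{m+1}h_m=\Sigma^m(1/x)$ up to a polynomial is false for $m\geq 1$: for $m=1$ it would make $\psi'-\psi$ a polynomial. It is $Dh_m$ that differs from $\Sigma^m(1/x)$ by a polynomial, and the paper derives that fact only later, from this very proposition.
\end{itemize}
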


\begin{proof}
We proceed by induction on $m$. The initial case $m=0$ is straightforward. Indeed, we have
$$
D^{n+1}_x\ln x ~\in ~\cK^{-1}_{(-1)^n}(\R_+)
$$
and therefore, by Proposition~\ref{prop:ffprime32}, the function $\ln x$ lies in the set defined in \eqref{eq:IndMulrr0} with $m=0$. Moreover, identity \eqref{eq:IndMulrr1} also clearly holds when $m=0$ (cf.\ Bohr-Mollerup's theorem).

Now assume that the result holds for some $m\in\N$, and let us show that it also holds for $m+1$. On the one hand, by the definition of the multiple gamma functions, we have
$$
\Delta\ln G_{m+1}(x) ~=~ \ln\frac{G_{m+1}(x+1)}{G_{m+1}(x)} ~=~ \ln G_m(x)\qquad (x>0),
$$
which, by the induction hypothesis, lies in the set defined in \eqref{eq:IndMulrr0}. On the other hand, the final condition in Proposition~\ref{prop:MulGamFuct4}, together with Proposition~\ref{prop:ffprime32}, implies that the function $\ln\circ\, G_{m+1}$ lies in $\cK^{m+1}_1(\R_+)$. Combining this observation with the definition of the map $\Sigma$ yields identity \eqref{eq:IndMulrr1}.

Finally, since the function $\ln\circ\, G_m$ also lies in 
$$
\cD^{m+n+1}\cap\cK^{m+n+1}_{(-1)^{n+1}}(\R_+)
$$
for any $n\in\N$, it follows from \eqref{eq:IndMulrr1} that the function $\ln\circ\, G_{m+1}$ lies in
$$
\cD^{m+2}\cap\cK^{m+n+1}_{(-1)^n}(\R_+),
$$
which completes the induction. The final assertion of the proposition then follows immediately.
\end{proof}

\begin{remark}
From Proposition~\ref{prop:MulGamFuct5}, we observe that the function
$$
f ~=~ D^{m+1}(\ln\circ\, G_m)
$$
satisfies
$$
(-1)^n\, D^nf(x)~\geq ~0\qquad (n\in\N,~x>0),
$$
which precisely means that it is \emph{completely monotone}. By Bernstein's little theorem, it follows that the function $\ln\circ\, G_m$---and hence $G_m$ itself---is real analytic (for a recent reference, see \cite[Section 2]{LamMarZen25}). Moreover, for any $a>0$, the function $\ln\circ\, G_m$ admits a Newton series expansion at $a$ that converges on all of $\R_+$; see \cite[Theorem 3.1]{LamMarZen25}. This means that, for any $a,x>0$,
\begin{eqnarray*}
\ln G_m(x) &=& \sum_{k=0}^{\infty}\tchoose{x-a}{k}\,\Delta^k\ln G_m(a)\\
&=& \sum_{k=0}^m\tchoose{x-a}{k}\ln G_{m-k}(a)+\sum_{k=1}^{\infty}\tchoose{x-a}{k+m}\,\Delta^k\ln a.
\end{eqnarray*}
Moreover, the convergence of the series is uniform on compact subsets of $\R_+$. Setting $a=1$ for instance, we obtain
$$
\ln G_m(x) ~=~ \sum_{k=1}^{\infty}\tchoose{x-1}{k+m}\,(\Delta_t^k\ln t)\big|_{t=1}\qquad (x>0).
$$
In the special case $m=1$, we recover the following Newton series expansion of the log-gamma function:
$$
\ln\Gamma(x) ~=~ \sum_{k=1}^{\infty}\tchoose{x-1}{k+1}\,(\Delta_t^k\ln t)\big|_{t=1}\qquad (x>0)
$$
(see, for instance, Graham {\em et al.}~\cite[p.\ 192]{GraKnuPat94}).
\end{remark}

We conclude this section by noting that a function lying in $\dom\Sigma$ need not lie in $\dom\Sigma^2$. More precisely, the following strict inclusion holds:
$$
\dom\Sigma^2~\varsubsetneq ~\dom\Sigma.
$$
The next example provides a clear illustration of this fact.

\begin{example}
Let the function $g\colon\R_+\to\R$ be defined by
$$
g(x) ~=~ 2^{-\lfloor x\rfloor}(2-2\{x\}+\{x\}^2)\qquad (x>0),
$$
where $\{x\}=x-\lfloor x\rfloor$ denotes the fractional part of $x$ (see Figure~\ref{fig:PwPa}). For any $k\in\N$, the restriction of $g$ to the interval $(k,k+1]$ is the quadratic polynomial
$$
g(x) ~=~ 2^{-k}{\,}(2-2(x-k)+(x-k)^2)\qquad (x\in (k,k+1]).
$$
Hence, the graph of this restriction is a convex parabola with vertex at $x=k+1$. It follows readily that $g$ lies in $\cD^0\cap\cK^0_{-1}(\R_+)$, and hence also in $\dom\Sigma$. Now define a function $f\colon\R_+\to\R$ by $f(x)=2-2g(x)$. One easily verify that $f(1)=0$ and that $\Delta f(x)=g(x)$. Since $f$ lies in $\cK^0_1(\R_+)$, it follows that $f=\Sigma g$. However, this function also lies in $\cD^1\setminus\cD^0$, but not in $\cK^1$. Consequently, $\Sigma^2 g$ does not exist.
\end{example}

\begin{figure}[htbp]
\begin{center}
\includegraphics[height=.35\linewidth]{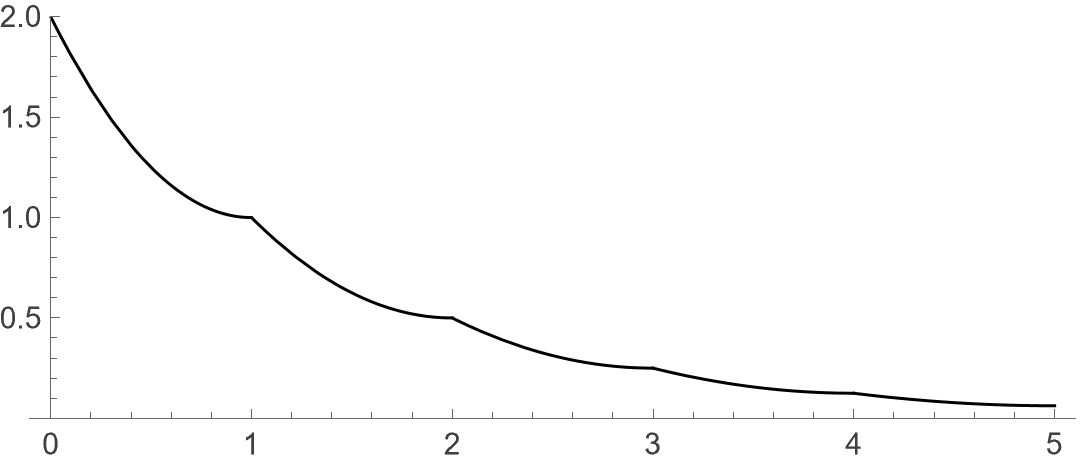}
\caption{Graph of $g(x)$ on $(0,5]$}
\label{fig:PwPa}
\end{center}
\end{figure}

\section{Iterating the Analog of Gauss' Limit}

Recall from \cite[p.\ 47]{MarZen22} that if $g$ lies in $\cD^p\cap\cK^p$ for some $p\in\N$, then for every $x>0$ and every $n\in\N^*{=\N\setminus\{0\}}$, the following identities hold:
\begin{eqnarray}
\Sigma g(n) &=& \sum_{k=1}^{n-1}g(k),\label{eq:Gau1}\\
\Sigma g(x+n) &=& \Sigma g(x)+\sum_{k=0}^{n-1}g(x+k),\label{eq:Gau2}
\end{eqnarray}
and
\begin{equation}\label{eq:Gau3}
\Sigma g(x) ~=~ f_n^p[g](x) + \rho_n^{p+1}[\Sigma g](x),
\end{equation}
where the function $f_n^p[g]\colon\R_+\to\R$ is defined in identity \eqref{eq:fnp2} and the remainder term $\rho_n^{p+1}[\Sigma g]\colon\R_+\to\R$ is given by
\begin{equation}\label{eq:Gau4}
\rho_n^{p+1}[\Sigma g](x) ~=~ \Sigma g(x+n)-\Sigma g(n)-\sum_{j=1}^p\tchoose{x}{j}\,\Delta^{j-1} g(n)\qquad (x>0).
\end{equation}
As discussed in the Introduction, the \emph{analog of Gauss' limit} is given by identity \eqref{eq:lim1}. In view of \eqref{eq:Gau3}, this identity is equivalent to the limit
$$
\lim_{n\to\infty}\,\rho_n^{p+1}[\Sigma g](x) ~=~ 0\qquad (x>0).
$$
It can further be shown \cite[Chapter 3]{MarZen22} that this latter convergence is uniform on any bounded subset of $\R_+$. When $g(x)=\ln x$ and $p=1$, we retrieve Gauss' limit \eqref{eq:GaussLimit00} for the gamma function. 

The aim of this section is to extend the analog of Gauss' limit to repeated principal indefinite sums, that is, to the function $\Sigma^{m+1}g$ for any $m\in\N$. This extension is stated in Corollary~\ref{lemma:GaussLim4659mco} below. We begin by recalling the discrete counterpart of Taylor's theorem and provide an elementary proof (for an alternative formulation, see, e.g., Lakshmikantham and Trigiante \cite[Theorem~1.1.2]{LakTri02}). We then establish a proposition that generalizes identities \eqref{eq:Gau1}--\eqref{eq:Gau3}.

\begin{lemma}[Discrete Counterpart of Taylor's Theorem]\label{lemma:Taylo68IntFReD}
For any $x>0$, any $m,n\in\N$, and any function $f\colon\R_+\to\R$, we have
$$
f(x+n) ~=~ \sum_{j=0}^m\tchoose{n}{j}\,\Delta^jf(x)
+\sum_{k=0}^{n-1}\tchoose{n-k-1}{m}\,\Delta^{m+1}f(x+k).
$$
\end{lemma}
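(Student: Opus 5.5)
We will prove the identity by induction on $m$, keeping $x>0$, $n\in\N$, and $f$ arbitrary throughout. All the work consists of telescoping sums together with the Pascal-type identity $\tchoose{a+1}{j+1}=\tchoose{a}{j+1}+\tchoose{a}{j}$ for binomial coefficients with nonnegative integer upper entries. Along the way we use the conventions $\tchoose{a}{j}=0$ whenever $0\leq a<j$, and that an empty sum (the case $n=0$) vanishes.

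\emph{Base case $m=0$.} The identity reads
$$
f(x+n) ~=~ f(x)+\sum_{k=0}^{n-1}\Delta f(x+k),
$$
since $\tchoose{n-k-1}{0}=1$ for $0\leq k\leq n-1$. This is the usual telescoping sum $\sum_{k=0}^{n-1}\big(f(x+k+1)-f(x+k)\big)=f(x+n)-f(x)$.

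\emph{Inductive step.} Assume the formula holds for some $m$ and all $x$, $n$, $f$. We rewrite its remainder term by applying the case $m=0$ to the function $\Delta^{m+1}f$ at the point $x$ with $k$ steps:
$$
\Delta^{m+1}f(x+k) ~=~ \Delta^{m+1}f(x)+\sum_{i=0}^{k-1}\Delta^{m+2}f(x+i).
$$
Substituting this into $\sum_{k=0}^{n-1}\tchoose{n-k-1}{m}\Delta^{m+1}f(x+k)$ produces two pieces.

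The first piece is $\Delta^{m+1}f(x)\sum_{k=0}^{n-1}\tchoose{n-k-1}{m}$. By the hockey-stick identity $\sum_{r=0}^{n-1}\tchoose{r}{m}=\tchoose{n}{m+1}$, this equals $\tchoose{n}{m+1}\Delta^{m+1}f(x)$, which is exactly the new top term of the Taylor sum.

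The second piece is the double sum $\sum_{k=0}^{n-1}\sum_{i=0}^{k-1}\tchoose{n-k-1}{m}\Delta^{m+2}f(x+i)$. We swap the order of summation so that $i$ runs from $0$ to $n-2$ and $k$ from $i+1$ to $n-1$. The inner sum is $\sum_{k=i+1}^{n-1}\tchoose{n-k-1}{m}=\sum_{r=0}^{n-i-2}\tchoose{r}{m}$, and by the hockey-stick identity again this equals $\tchoose{n-i-1}{m+1}$. The term $i=n-1$ can be added freely, since $\tchoose{0}{m+1}=0$. This yields precisely the remainder $\sum_{k=0}^{n-1}\tchoose{n-k-1}{m+1}\Delta^{m+2}f(x+k)$, which completes the induction.

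The only step requiring any care is the index bookkeeping in the exchange of summation, together with the edge cases (for example $n=0$, or $n\leq m$), where the conventions above make every term consistent. There is no real obstacle here. An alternative route would be to write $f(x+n)=(I+\Delta)^n f(x)$ and split the binomial expansion, but the induction above avoids any operator formalism.
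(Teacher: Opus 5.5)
Your proof is correct and follows the paper's route: induction on $m$ with the telescoping base case, and an inductive step that turns the old remainder into the new term $\tchoose{n}{m+1}\Delta^{m+1}f(x)$ plus the new remainder. The only difference is cosmetic: the paper does this in one stroke by summation by parts, using $\tchoose{n-k-1}{m}=-\Delta_k\tchoose{n-k}{m+1}$, whereas you carry out the same Abel summation explicitly by exchanging the order of a double sum and applying the hockey-stick identity.
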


\begin{proof}
We proceed by induction on $m$. The case when $m=0$ reduces to the following telescoping sum
$$
f(x+n) ~=~ f(x)+\sum_{k=0}^{n-1}\Delta f(x+k).
$$
Suppose now that the result holds for some $m\in\N$ and let us show that it still holds for $m+1$. Let $x>0$ and $n\in\N$. Using summation by parts, we obtain
\begin{eqnarray*}
\lefteqn{\sum_{k=0}^{n-1}\tchoose{n-1-k}{m}\,\Delta^{m+1}f(x+k) ~=~ \sum_{k=0}^{n-1}\big({-}\Delta_k\tchoose{n-k}{m+1}\big)\,\Delta^{m+1}f(x+k)}\\
&=& -\tchoose{n-k}{m+1}\,\Delta^{m+1}f(x+k)\Big|_0^n
+\sum_{k=0}^{n-1}\tchoose{n-k-1}{m+1}\,\Delta^{m+2}f(x+k)\\
&=& \tchoose{n}{m+1}\,\Delta^{m+1}f(x)
+\sum_{k=0}^{n-1}\tchoose{n-k-1}{m+1}\,\Delta^{m+2}f(x+k).
\end{eqnarray*}
Using the induction hypothesis, we now see that the result still holds for $m+1$.
\end{proof}

\begin{proposition}\label{lemma:GaussLim4659m}
For any $x>0$, any $m,p\in\N$, any $n\in\N^*$, and any function $g$ lying in $\cD^p\cap\cK^{p+m}$, we have
\begin{eqnarray}
\Sigma^{m+1} g(n) &=& \sum_{k=1}^{n-1}\tchoose{n-k-1}{m}{\,}g(k),\label{lemma:GaussLim46591m}\\
\Sigma^{m+1} g(x+n) &=& \sum_{j=0}^m\tchoose{n}{j}\,\Sigma^{m-j+1}g(x)
+\sum_{k=0}^{n-1}\tchoose{n-k-1}{m}{\,}g(x+k),\label{lemma:GaussLim46592m}
\end{eqnarray}
and
\begin{equation}\label{lemma:GaussLim46593m0}
\Sigma^{m+1}g(x) ~=~ f_n^{p+m}[\Sigma^m g](x)+\rho_n^{p+m+1}[\Sigma^{m+1}g](x),
\end{equation}
where
\begin{eqnarray}
f_n^{p+m}[\Sigma^m g](x) &=& \sum_{k=1}^{n-1}\tchoose{n-k-1}{m}{\,}g(k)-\sum_{k=0}^{n-1}\tchoose{n-k-1}{m}{\,}g(x+k)\nonumber\\
&& \null + \sum_{j=1}^{p+m}\tchoose{x}{j}\,\Delta^j\Sigma^{m+1}g(n)
-\sum_{j=1}^m\tchoose{n}{j}\,\Sigma^{m-j+1}g(x).\label{lemma:GaussLim46594m}
\end{eqnarray}
\end{proposition}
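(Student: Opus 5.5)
Since $g\in\cD^p\cap\cK^{p+m}$, Proposition~\ref{prop:SuffCondDomKm44} and the proof of Proposition~\ref{prop:338-domSm} show that $\Sigma^k g\in\cD^{p+k}\cap\cK^{p+k}$ for every $k=0,\ldots,m$. Hence identities \eqref{eq:Gau1}--\eqref{eq:Gau4} may be applied to each function $\Sigma^k g$ with $p$ replaced by $p+k$. In particular $\Delta\Sigma^{k+1}g=\Sigma^k g$ and $\Sigma^{k+1}g(1)=0$ for all $k\le m$. This gives $\Delta^j\Sigma^{m+1}g=\Sigma^{m+1-j}g$ for $0\le j\le m+1$, and consequently $\Delta^j\Sigma^{m+1}g(1)=0$ for $0\le j\le m$.

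We start with \eqref{lemma:GaussLim46592m}. Apply Lemma~\ref{lemma:Taylo68IntFReD} to $f=\Sigma^{m+1}g$ with the given $m$. By the relation above, $\Delta^jf=\Sigma^{m-j+1}g$ and $\Delta^{m+1}f=g$, so the lemma yields \eqref{lemma:GaussLim46592m} directly. For \eqref{lemma:GaussLim46591m}, set $x=1$ and replace $n$ by $n-1$, so that $x+n$ becomes $n$. All terms $\Sigma^{m-j+1}g(1)$ with $j\le m$ vanish. What remains is $\sum_{k=0}^{n-2}\binom{n-k-2}{m}g(1+k)$, and the reindexing $k\mapsto k-1$ turns this into $\sum_{k=1}^{n-1}\binom{n-k-1}{m}g(k)$. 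The case $n=1$ is consistent, since both sides equal $0$.

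For \eqref{lemma:GaussLim46593m0}, apply \eqref{eq:Gau3}--\eqref{eq:Gau4} to $\Sigma^m g\in\cD^{p+m}\cap\cK^{p+m}$. This gives $\Sigma^{m+1}g(x)=f_n^{p+m}[\Sigma^m g](x)+\rho_n^{p+m+1}[\Sigma^{m+1}g](x)$, with
\[
f_n^{p+m}[\Sigma^m g](x)=\Sigma^{m+1}g(x)-\Sigma^{m+1}g(x+n)+\Sigma^{m+1}g(n)+\sum_{j=1}^{p+m}\tchoose{x}{j}\,\Delta^{j-1}\Sigma^m g(n).
\]
The identity itself is therefore immediate from \eqref{eq:Gau3}. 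The remaining work is to show that this $f_n^{p+m}[\Sigma^m g]$ coincides with the expression \eqref{lemma:GaussLim46594m}. To do so, substitute \eqref{lemma:GaussLim46592m} for $\Sigma^{m+1}g(x+n)$; its $j=0$ term cancels $\Sigma^{m+1}g(x)$. Substitute \eqref{lemma:GaussLim46591m} for $\Sigma^{m+1}g(n)$. Finally, use $\Delta^{j-1}\Sigma^m g=\Delta^j\Sigma^{m+1}g$ in the last sum. The result is exactly \eqref{lemma:GaussLim46594m}.

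The main obstacle is bookkeeping rather than any new idea. One must confirm that $\Delta^{j}\Sigma^{m+1}g=\Sigma^{m+1-j}g$ holds only for $j\le m+1$, while the sum in \eqref{lemma:GaussLim46594m} runs up to $p+m$. This is harmless, because that sum is simply written in terms of $\Delta^j\Sigma^{m+1}g$ and no further identification is needed. One must also handle the index shift at $n=1$ in \eqref{lemma:GaussLim46591m} carefully.
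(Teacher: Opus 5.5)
Your proposal is correct and follows the paper's proof essentially step for step. You apply Lemma~\ref{lemma:Taylo68IntFReD} to $f=\Sigma^{m+1}g$ to get \eqref{lemma:GaussLim46592m}, specialize at $x=1$ for \eqref{lemma:GaussLim46591m} (your explicit shift $n\mapsto n-1$ is what the paper leaves implicit), and apply \eqref{eq:Gau3}--\eqref{eq:Gau4} to $\Sigma^m g\in\cD^{p+m}\cap\cK^{p+m}$, then substitute the first two identities, to obtain \eqref{lemma:GaussLim46593m0} and \eqref{lemma:GaussLim46594m}.
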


\begin{proof}
We have $g\in\dom\Sigma^{m+1}$ by Proposition~\ref{prop:SuffCondDomKm44}. Identity \eqref{lemma:GaussLim46592m} then follows immediately by applying Lemma~\ref{lemma:Taylo68IntFReD} to the function $f=\Sigma^{m+1} g$. Identity \eqref{lemma:GaussLim46591m} is obtained by setting $x=1$ in \eqref{lemma:GaussLim46592m}, while identity \eqref{lemma:GaussLim46593m0} follows from \eqref{eq:Gau3} applied to $\Sigma^m g$, with $p$ replaced by $p+m$.

We now establish \eqref{lemma:GaussLim46594m}. Using \eqref{lemma:GaussLim46593m0} and then applying \eqref{eq:Gau4} to $\Sigma^m g$, we obtain
\begin{multline*}
f_n^{p+m}[\Sigma^m g](x) ~=~ \Sigma^{m+1}g(x)-\rho_n^{p+m+1}[\Sigma^{m+1}g](x)\\
=~ \Sigma^{m+1}g(x)-\Sigma^{m+1} g(x+n)+\Sigma^{m+1} g(n)+\sum_{j=1}^{p+m}\tchoose{x}{j}\,\Delta^j\Sigma^{m+1} g(n).
\end{multline*}
Using \eqref{lemma:GaussLim46591m} and \eqref{lemma:GaussLim46592m}, this expression becomes
\begin{eqnarray*}
f_n^{p+m}[\Sigma^m g](x) &=& -\sum_{j=1}^m\tchoose{n}{j}\,\Sigma^{m-j+1}g(x)
-\sum_{k=0}^{n-1}\tchoose{n-k-1}{m}{\,}g(x+k)\\
&& +\sum_{k=1}^{n-1}\tchoose{n-k-1}{m}{\,}g(k)+\sum_{j=1}^{p+m}\tchoose{x}{j}\,\Delta^j\Sigma^{m+1} g(n),
\end{eqnarray*}
which establishes \eqref{lemma:GaussLim46594m}, and thus completes the proof of the proposition.
\end{proof}

\begin{corollary}[Iterated Form of the Analog of Gauss' Limit]\label{lemma:GaussLim4659mco}
For any $x>0$, any $m,p\in\N$, and any function $g$ lying in $\cD^p\cap\cK^{p+m}$, we have
\begin{equation}\label{lemma:GaussLim46593m}
\Sigma^{m+1} g(x) ~=~ \lim_{n\to\infty}f_n^{p+m}[\Sigma^m g](x).
\end{equation}
\end{corollary}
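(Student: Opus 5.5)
The plan is to obtain the limit directly from the decomposition \eqref{lemma:GaussLim46593m0} of Proposition~\ref{lemma:GaussLim4659m}. That proposition gives, for every $n\in\N^*$,
$$
\Sigma^{m+1}g(x) ~=~ f_n^{p+m}[\Sigma^m g](x)+\rho_n^{p+m+1}[\Sigma^{m+1}g](x).
$$
Since the left-hand side does not depend on $n$, identity \eqref{lemma:GaussLim46593m} is equivalent to showing that $\rho_n^{p+m+1}[\Sigma^{m+1}g](x)\to 0$ as $n\to\infty$, for each fixed $x>0$.

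To get this, I would reduce to the one-step analog of Gauss' limit \eqref{eq:lim1}, applied to the function $h=\Sigma^m g$ with order $p+m$. By Proposition~\ref{prop:SuffCondDomKm44}, or by assertion (iii) of Proposition~\ref{prop:338-domSm} applied as in its proof, the hypothesis $g\in\cD^p\cap\cK^{p+m}$ guarantees that $\Sigma^k g$ exists and lies in $\cD^{p+k}\cap\cK^{p+k}$ for each $k=0,\ldots,m$. In particular $h=\Sigma^m g\in\cD^{p+m}\cap\cK^{p+m}$.

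The result of Marichal and Zena\"{\i}di recalled in the Introduction then applies to $h$ with $p$ replaced by $p+m$. It gives $\Sigma h(x)=\lim_{n\to\infty}f_n^{p+m}[h](x)$, which is equivalent (via \eqref{eq:Gau3}) to $\rho_n^{p+m+1}[\Sigma h](x)\to 0$. Since $\Sigma h=\Sigma^{m+1}g$, this is precisely the needed vanishing of the remainder, and the corollary follows.

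The only point requiring care is checking that the object $f_n^{p+m}[\Sigma^m g]$ in the corollary, written in the expanded form \eqref{lemma:GaussLim46594m}, is indeed the same function as the $f_n^{p+m}[h]$ of \eqref{eq:fnp2} for which \eqref{eq:lim1} is known. This is guaranteed by the proof of Proposition~\ref{lemma:GaussLim4659m}, where \eqref{lemma:GaussLim46594m} was derived by rewriting that very quantity via \eqref{eq:Gau4}. Also, $\Delta^j\Sigma^{m+1}g(n)=\Delta^{j-1}\Sigma^m g(n)$, consistent with \eqref{eq:fnp2}. So no new estimate is needed, and I expect no genuine obstacle beyond this bookkeeping of membership $\Sigma^m g\in\cD^{p+m}\cap\cK^{p+m}$.
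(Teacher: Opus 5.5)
Your proposal is correct and is essentially the paper's proof: the paper applies \eqref{eq:lim1}, with $p$ replaced by $p+m$, to $\Sigma^m g$, which lies in $\cD^{p+m}\cap\cK^{p+m}$. Your detour through the remainder $\rho_n^{p+m+1}[\Sigma^{m+1}g]$ is only the equivalent reformulation of that same limit via \eqref{eq:Gau3}, and your check that $\Delta^j\Sigma^{m+1}g(n)=\Delta^{j-1}\Sigma^m g(n)$ confirms that the two expressions for $f_n^{p+m}[\Sigma^m g]$ coincide.
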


\begin{proof}
This identity follows immediately by applying \eqref{eq:lim1} to the function $\Sigma^m g$, which lies in $\cD^{p+m}\cap\cK^{p+m}$ (cf.\ Proposition \ref{prop:338-domSm}).
\end{proof}

We can now readily see that identities \eqref{eq:Gau1}--\eqref{eq:Gau3}, as well as \eqref{eq:lim1}--\eqref{eq:fnp2}, are recovered by setting $m=0$ in \eqref{lemma:GaussLim46591m}--\eqref{lemma:GaussLim46593m}. 

In the following corollary, we provide an identity that yield an alternative expression for $f_n^{p+m}[\Sigma^m g](x)$.

\begin{corollary}
For any $x>0$, any $m,p\in\N$, any $n\in\N^*$, and any function $g\colon\R_+\to\R$, we have
$$
\sum_{k=1}^{n-1}\tchoose{n-k-1}{m}{\,}g(k) + \sum_{j=1}^{p+m}\tchoose{x}{j}\,\Delta^j\Sigma^{m+1}g(n) ~=~ \sum_{k=1}^{n-1}\tchoose{x+n-k-1}{m}{\,}g(k) + \sum_{j=1}^p\tchoose{x}{m+j}\,\Delta^{j-1}g(n).
$$
\end{corollary}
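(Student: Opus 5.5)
The plan is to reduce everything to values of $g$ at the integers and then apply Vandermonde's identity. Since $g$ is arbitrary here, I read $\Sigma^{m+1}g(n)$, for $n\in\N^*$, through identity \eqref{lemma:GaussLim46591m} of Proposition~\ref{lemma:GaussLim4659m}. For $g\in\dom\Sigma^{m+1}$ this identity holds, and in general I take it as the definition of the integer values. Thus I set, for every $r\in\N$ and $n\in\N^*$,
$$
S_r(n) ~=~ \sum_{k=1}^{n-1}\tchoose{n-k-1}{r}\,g(k),
$$
so that $\Sigma^{m+1}g(n)=S_m(n)$. The quantities $\Delta^j\Sigma^{m+1}g(n)$ are then the forward differences in $n$ of this sequence, and they involve only the values $g(k)$ with $k\geq 1$ an integer.

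The first step is the difference rule for $S_r$. A direct computation gives
$$
\Delta S_r(n) ~=~ S_r(n+1)-S_r(n) ~=~ \sum_{k=1}^{n-1}\tchoose{n-k-1}{r-1}\,g(k)+\tchoose{0}{r}\,g(n).
$$
This uses Pascal's rule $\tchoose{n-k}{r}-\tchoose{n-k-1}{r}=\tchoose{n-k-1}{r-1}$. For $r\geq 1$ this yields $\Delta S_r=S_{r-1}$, because $\tchoose{0}{r}=0$. For $r=0$ it yields $\Delta S_0(n)=g(n)$, since $\tchoose{n-k-1}{-1}=0$. Iterating, I get $\Delta^j\Sigma^{m+1}g(n)=S_{m-j}(n)$ for $0\leq j\leq m$, and $\Delta^{m+i}\Sigma^{m+1}g(n)=\Delta^{i-1}g(n)$ for $i\geq 1$.

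The second step splits the left-hand side at $j=m$:
$$
\sum_{j=0}^{m}\tchoose{x}{j}\,S_{m-j}(n)+\sum_{i=1}^{p}\tchoose{x}{m+i}\,\Delta^{i-1}g(n).
$$
Here the $j=0$ term is the first sum $S_m(n)$ of the left-hand side. The second sum is exactly the last term on the right-hand side.

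In the first sum I interchange the order of summation and obtain
$$
\sum_{k=1}^{n-1}g(k)\sum_{j=0}^{m}\tchoose{x}{j}\tchoose{n-k-1}{m-j}.
$$
The inner sum equals $\tchoose{x+n-k-1}{m}$ by the Chu--Vandermonde identity, which holds for real $x$ because both sides are polynomials in $x$ that agree on $\N$. This finishes the argument.

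The only delicate point is interpretive rather than computational: making precise what $\Delta^j\Sigma^{m+1}g(n)$ means for an arbitrary $g$. I handle it by working with the integer values given by \eqref{lemma:GaussLim46591m}, which is consistent with $\Sigma^{m+1}g$ whenever $g$ lies in the domain. Beyond that, the one thing to check carefully is the boundary term $\tchoose{0}{r}g(n)$ in the difference rule, since it is what switches the chain from $S_0$ to $g$.
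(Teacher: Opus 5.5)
Your proof is correct and follows the paper's argument. Like the paper, you rewrite the left-hand side as $\sum_{j=0}^{p+m}\tchoose{x}{j}\,\Delta^j\Sigma^{m+1}g(n)$, split at $j=m$, expand the head with \eqref{lemma:GaussLim46591m}, interchange the sums and apply Chu--Vandermonde. The one difference is that you get $\Delta^j\Sigma^{m+1}g(n)=S_{m-j}(n)$ and $\Delta^{m+i}\Sigma^{m+1}g(n)=\Delta^{i-1}g(n)$ directly from Pascal's rule on the integer values, whereas the paper uses $\Delta^j\Sigma^{m+1}g=\Sigma^{m+1-j}g$ (valid for $g\in\dom\Sigma^{m+1}$) and then applies \eqref{lemma:GaussLim46591m} to each $\Sigma^{j+1}g$; your route has the side benefit of giving meaning to the statement's ``any function $g$''.
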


\begin{proof}
Using \eqref{lemma:GaussLim46591m}, the left-hand side of the stated identity can be rewritten as
\begin{eqnarray*}
\sum_{j=0}^{p+m}\tchoose{x}{j}\,\Delta^j\Sigma^{m+1}g(n) &=& \sum_{j=0}^m\tchoose{x}{j}\,\Sigma^{m+1-j}g(n)+\sum_{j=m+1}^{p+m}\tchoose{x}{j}\,\Delta^{j-(m+1)}g(n)\\
&=& \sum_{j=0}^m\tchoose{x}{m-j}\,\Sigma^{j+1}g(n)+\sum_{j=1}^p\tchoose{x}{m+j}\,\Delta^{j-1}g(n).
\end{eqnarray*}
Again using \eqref{lemma:GaussLim46591m}, we obtain
\begin{eqnarray*}
\sum_{j=0}^m\tchoose{x}{m-j}\,\Sigma^{j+1}g(n) &=& \sum_{j=0}^m\tchoose{x}{m-j}\,\sum_{k=1}^{n-1}\tchoose{n-k-1}{j}g(k)\\
&=& \sum_{k=1}^{n-1} g(k)\,\sum_{j=0}^m\tchoose{x}{m-j}\tchoose{m-k-1}{j},
\end{eqnarray*}
where the inner sum reduces to $\tchoose{x+n-k-1}{m}$ by the classical Chu-Vandermonde's identity \cite[p.\ 170]{GraKnuPat94}. This establishes the stated identity.
\end{proof}

Interestingly, the iterated form of the analog of Gauss' limit, as given in \eqref{lemma:GaussLim46593m}, can yield rather surprising formulas. To demonstrate this, we first focus on the Barnes $G$-function and then consider the sequence of multiple gamma functions.

\begin{example}[The Barnes Function]\label{ex:324-Bar639}
Consider $g(x)=\ln x$, so that $\Sigma g(x)=\ln\Gamma(x)$ and $\Sigma^2g(x)=\ln G(x)$, and hence $p=1$ and $m=1$ in Proposition~\ref{lemma:GaussLim4659m}. In this case, we obtain
\begin{eqnarray*}
f_n^2[\Sigma g](x) &=& \sum_{k=1}^{n-1}(n-k-1)\ln k-\sum_{k=0}^{n-1}(n-k-1)\ln(x+k)\\
&& \null + x\ln\Gamma(n)+\tchoose{x}{2}\ln n-n\ln\Gamma(x).
\end{eqnarray*}
The limit in \eqref{lemma:GaussLim46593m} then becomes
$$
\ln G(x) ~=~ \lim_{n\to\infty}f_n^2[\Sigma g](x).
$$
In the multiplicative notation, this yields the following unexpected formula:
\begin{equation}\label{eq:325b-7re6fgsd}
G(x) ~=~ \lim_{n\to\infty}\frac{n!^x{\,}n^{{x\choose 2}}}{x^n\,\Gamma(x)^{n+1}}
\,\prod_{k=1}^n\Big(1+\frac{x}{k}\Big)^{k-n}.
\end{equation}
This expression is a remarkable variant of the following analog of Gauss' limit for the Barnes $G$-function, obtained directly by taking $g(x)=\ln\Gamma(x)$ and $p=2$ in \eqref{eq:lim1} and \eqref{eq:fnp2} (see \cite[p.\ 224]{MarZen22})
\begin{equation}\label{eq:325b-7re6fgsd1}
G(x) ~=~ \lim_{n\to\infty}\frac{n!^x{\,}n^{{x\choose 2}}}{\Gamma(x)}\,\prod_{k=1}^n\frac{\Gamma(k)}{\Gamma(x+k)}{\,}.
\end{equation}
The advantage of the representation \eqref{eq:325b-7re6fgsd} lies in the fact that the gamma function appears only outside the product.
\end{example}

\begin{example}[The Multiple Gamma Functions]
Example~\ref{ex:324-Bar639} can be easily generalized by considering the sequence of the multiple gamma functions as introduced in the previous section. In this case, \eqref{eq:325b-7re6fgsd} can be generalized into
$$
G_{m+1}(x) ~=~ \lim_{n\to\infty}\frac{n^{{x\choose m+1}}}{x^{{n\choose m}}}\,\Bigg(\prod_{j=1}^m\frac{G_{m+1-j}(n+1)^{{x\choose j}}}{G_{m+1-j}(x)^{{n+1\choose j}}}\Bigg)\prod_{k=1}^n\Big(1+\frac{x}{k}\Big)^{-{n-k\choose m}}.
$$
Furthermore, the generalization of \eqref{eq:325b-7re6fgsd1} is given by (see also \cite[Section 12.1]{MarZen22})
$$
G_{m+1}(x) ~=~ \lim_{n\to\infty}\frac{1}{G_m(x)}\,\prod_{j=1}^{m+1}G_{m+1-j}(n+1)^{{x\choose j}}\,\prod_{k=1}^n\frac{G_m(k)}{G_m(x+k)}{\,}.\qedhere
$$
\end{example}

We end this section with a closer examination of the values of the function $\Sigma^{m+1} g$ at the natural integers. First, by definition of the map $\Sigma$, we have
$$
\Sigma g(1) ~=~ 0
$$
for any $g\in\dom\Sigma$. More generally, it follows from \eqref{lemma:GaussLim46591m} that for any $m\in\N$ and any $g\in\dom\Sigma^{m+1}$, we have
\begin{equation}\label{lemma:GaussLim46597m}
\Sigma^{m+1}g(n) ~=~ 0\qquad (n=1,\ldots,m+1).
\end{equation}
Moreover,
for any $q\in\N$, we have
\begin{equation}\label{eq:23hk5j}
\Sigma^{m+1}g(m+1+q) ~=~ \sum_{k=1}^q\tchoose{m+q-k}{m}{\,}g(k).
\end{equation}
For instance,
\begin{eqnarray*}
\Sigma^{m+1}g(m+2) &=& g(1),\\
\Sigma^{m+1}g(m+3) &=& (m+1){\,}g(1)+g(2),\quad\text{etc.}
\end{eqnarray*}


\begin{remark}
We can easily establish the following identity, which provides an alternative formula for \eqref{eq:23hk5j}:
$$
\Sigma^{m+1} g(m+1+q) ~=~ \sum_{k=1}^q\tchoose{m+q}{m+k}\,\Delta^{k-1}g(1),\qquad n\in\N^*.
$$
Indeed, for any $n\in\N^*$, we may write
$$
\Sigma^{m+1} g(n) ~=~ \Sigma^{m+1}g(x+n-1)\big|_{x=1} ~=~ \sum_{k\geq 0}\tchoose{n-1}{k}\,\Delta^k\,\Sigma^{m+1}g(x)\big|_{x=1}{\,}.
$$
By using \eqref{lemma:GaussLim46597m}, the latter expression simplifies to:
$$
\sum_{k\geq m+1}\tchoose{n-1}{k}\,\Delta^k\,\Sigma^{m+1}g(1).
$$
Reindexing the sum and substituting $n=m+1+q$ yields the stated expression.
\end{remark}

\section{A Characterization}

In this section, we characterize the function $\Sigma^{m+1}g$ as a distinguished solution $f\colon\R_+\to\R$ to the equation $\Delta^{m+1}f=g$. The basic case $m=0$ simply reduces to a generalization of Bohr-Mollerup's theorem, as recalled in the Introduction. We restate this result in the next proposition. 

\begin{proposition}\label{prop:33-a7asd6f5Char}
Let $g$ lie in $\cD^p\cap\cK^p$ for some $p\in\N$ and let $f\colon \R_+\to\R$ be a
solution to the equation $\Delta f=g$. Then $f$ lies in $\cK^p$ if and only if $f = c + \Sigma g$ for some $c\in\R$. In particular, $\Sigma g$ is the unique solution to the equation $\Delta f=g$ that lies in $\cK^p$ and vanishes at $1$.
\end{proposition}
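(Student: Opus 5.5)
This proposition restates the generalized Bohr--Mollerup theorem of Marichal and Zena\"{\i}di \cite[Chapter 5]{MarZen22}. My plan is to reduce it to the Gauss-type limit \eqref{eq:lim1} and the remainder identity \eqref{eq:Gau3}, both of which are available here. The ``if'' direction is immediate. As noted after Definition~\ref{de:pconvpconc}, if $g\in\cD^p\cap\cK^p_{1}$ then $\Sigma g\in\cK^p_{-1}$, and symmetrically for $\cK^p_{-1}$. Hence $\Sigma g\in\cK^p$. Adding a constant changes no divided difference of order $\geq 1$, and for $p=0$ it does not affect monotonicity. So $c+\Sigma g\in\cK^p$, and clearly $\Delta(c+\Sigma g)=g$.

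For the ``only if'' direction, let $f\in\cK^p$ with $\Delta f=g$, and set $h=f-f(1)$. Then $h$ solves $\Delta h=g$ and vanishes at $1$. Telescoping gives, for $n\in\N^*$,
$$
h(n)=\sum_{k=1}^{n-1}g(k),\qquad h(x+n)=h(x)+\sum_{k=0}^{n-1}g(x+k).
$$
Exactly as in \eqref{eq:Gau3}--\eqref{eq:Gau4}, this yields $h(x)=f_n^p[g](x)+\rho_n(x)$, where
$$
\rho_n(x)=h(x+n)-h(n)-\sum_{j=1}^p\tchoose{x}{j}\,\Delta^{j-1}g(n).
$$
Since $f_n^p[g](x)\to\Sigma g(x)$ by \eqref{eq:lim1}, it suffices to show that $\rho_n(x)\to 0$ for every $x>0$. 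This gives $h=\Sigma g$, i.e.\ $f=f(1)+\Sigma g$. Uniqueness of the solution vanishing at $1$ then follows at once.

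The main obstacle is this remainder estimate. I would first reduce to integer-free positions: by the translation identity it is enough to treat $x\in(0,1]$. For $x\in\N$ the remainder is exactly a Newton-type interpolation error, which vanishes by the discrete Taylor formula (Lemma~\ref{lemma:Taylo68IntFReD}). For general $x$, note that $\rho_n(x)$ is the error at $n+x$ of the degree-$p$ interpolation polynomial of $h$ at the nodes $n,n+1,\dots,n+p$. Indeed, $\Delta^{j}h(n)=\Delta^{j-1}g(n)$, so Newton's forward formula reproduces the sum. This error equals
$$
\prod_{i=0}^{p}(x-i)\; h[n,n+1,\dots,n+p,n+x].
$$
Since $h$ is eventually $p$-convex or $p$-concave, divided differences of order $p+1$ are eventually monotone in the nodes. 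This lets me sandwich $h[n,\dots,n+p,n+x]$ between $h[n-1,\dots,n+p]$ and $h[n,\dots,n+p+1]$ (up to the ordering of $x$ relative to $0,1$). These bounding quantities equal $\Delta^{p}g(n-1)/(p+1)!$ and $\Delta^{p}g(n)/(p+1)!$, which tend to $0$ because $g\in\cD^p$. Hence $\rho_n(x)\to0$, with a bound of the form $|\rho_n(x)|\le C_x\max\{|\Delta^pg(n-1)|,|\Delta^pg(n)|\}$. The delicate points I expect are getting the sign and ordering in the sandwich right, and treating the case $p=0$ (monotonicity) separately, where $\rho_n(x)=h(x+n)-h(n)$ lies between $g(n-1)$ and $g(n)$ up to sign.
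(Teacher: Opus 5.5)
The paper gives no proof of this proposition; it restates the generalized Bohr--Mollerup theorem of \cite[Chapter 5]{MarZen22}. Your overall route is the right one and is essentially how it is proved there:
\begin{itemize}
\item the ``if'' part via the sign-reversal property of $\Sigma$;
\item for the ``only if'' part, writing $h=f-f(1)=f_n^p[g]+\rho_n$ and identifying $\rho_n(x)=\prod_{i=0}^p(x-i)\,h[n,\ldots,n+p,n+x]$ as a Newton interpolation error;
\item reducing everything to $\rho_n(x)\to 0$.
\end{itemize}

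The gap is in the remainder estimate. Eventual $p$-convexity of $h$ says that divided differences of order $p+1$ (at $p+2$ points) are eventually nonnegative. It is the divided differences of order $p$ that are monotone in each node. Monotonicity of the order-$(p+1)$ differences would require $h\in\cK^{p+1}$, which is not assumed.

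The sandwich you propose is in fact false in general. For $p=1$, take $h$ convex and affine on each $[n,n+1]$ with strictly increasing slopes, e.g.\ the piecewise-linear interpolant of $\ln\Gamma$ at the integers. Then $g=\Delta h$ is eventually the piecewise-linear interpolant of $\ln$, which lies in $\cD^1\cap\cK^1$. Here $h[n,n+1,n+x]=0$ for $0<x<1$, whereas $h[n-1,n,n+1]$ and $h[n,n+1,n+2]$ are both strictly positive. Likewise, for $p=0$, $\rho_n(x)=h(n+x)-h(n)$ lies between $0$ and $g(n)$, not between $g(n-1)$ and $g(n)$.

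The repair uses the same ingredients at the correct order. Write
\[
h[n,\ldots,n+p,n+x]=\frac{1}{x}\big(h[n+1,\ldots,n+p,n+x]-h[n,\ldots,n+p]\big).
\]
Then apply monotonicity of the order-$p$ divided differences, moving the node $n$ up to $n+x$ and then $n+x$ up to $n+p+1$. For $n$ large and $0<x<1$ this gives
\[
0\le\pm\big(h[n+1,\ldots,n+p,n+x]-h[n,\ldots,n+p]\big)\le\pm\big(h[n+1,\ldots,n+p+1]-h[n,\ldots,n+p]\big)=\pm\frac{\Delta^pg(n)}{p!},
\]
with the sign $+$ in the eventually $p$-convex case. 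Hence $|\rho_n(x)|\le|\tchoose{x-1}{p}|\,|\Delta^pg(n)|\to 0$. This is the standard remainder bound of \cite{MarZen22}, and it covers $p=0$ without a separate case.

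A small inaccuracy: for integer $x>p$ the remainder does not vanish. By Lemma~\ref{lemma:Taylo68IntFReD} it equals $\sum_{k=0}^{x-1}\tchoose{x-k-1}{p}\,\Delta^pg(n+k)$, which only tends to $0$. After your reduction to $(0,1]$ only $x=1$ matters, where $\rho_n(1)$ is $0$ if $p\ge 1$ and $g(n)$ if $p=0$.
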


In the following proposition, we extend this result to characterize the function $\Sigma^{m+1}g$ for an arbitrary integer $m\in\N$. To formulate the assumptions on $g$, we naturally make use of Proposition~\ref{prop:SuffCondDomKm44}.

\begin{proposition}\label{prop:326-Ext-33z}
Let $g$ lie in $\cD^p\cap\cK^{p+m}$ for some $p,m\in\N$ and let $f\colon\R_+\to\R$ be a solution to the equation $\Delta^{m+1}f=g$. Then $\Delta^kf$ lies in $\cK^{p+m}$ for $k=0,1,\ldots,m$ if and only if
\begin{equation}\label{eq:PrPmSg4}
f ~=~ P_m+\Sigma^{m+1}g
\end{equation}
for some polynomial $P_m$ of degree at most $m$. In particular, $\Sigma^{m+1}g$ is the unique solution to the equation $\Delta^{m+1}f=g$ for which $\Delta^k f$ lies in $\cK^{p+m}$ and $f(k+1)=0$ for $k=0,1,\ldots,m$.
\end{proposition}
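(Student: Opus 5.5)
The plan is to prove both directions by induction on $m$, reducing at each step to Proposition~\ref{prop:33-a7asd6f5Char}. The engine is a descending ladder: from $\Delta^{m+1}f=g$ we peel off one difference at a time using the uniqueness part of the Bohr-Mollerup type result.

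For the ``if'' direction, the main work is to show that $\Delta^k\Sigma^{m+1}g$ lies in $\cK^{p+m}$ for $k=0,\dots,m$. First, since $\Delta\Sigma h=h$ for every $h\in\dom\Sigma$, we get
\[
\Delta^k\Sigma^{m+1}g=\Sigma^{m+1-k}g .
\]
By Proposition~\ref{prop:SuffCondDomKm44} applied with $m$ replaced by $m-k$, noting that $g\in\cD^p\cap\cK^{p+m}\subset\cK^{p+m-k}$, we only obtain $\Sigma^{m+1-k}g\in\cK^{p+m-k}$, which is too weak. So instead we use $g\in\cK^{p+m}=\cK^{(p+k)+(m-k)}$ together with $g\in\cD^p\subset\cD^{p+k}$. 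Proposition~\ref{prop:SuffCondDomKm44} with $p$ replaced by $p+k$ and $m$ by $m-k$ then gives $\Sigma^{m-k+1}g\in\cK^{p+k+m-k}=\cK^{p+m}$, as needed. Next, adding a polynomial $P_m$ of degree at most $m$ preserves this. Indeed, $\Delta^kP_m$ has degree at most $m-k\le p+m$, so all its divided differences of order $p+m+1$ vanish. Moreover, divided differences are linear, so eventual $(p+m)$-convexity or concavity is unchanged. Finally, $\Delta^{m+1}P_m=0$ gives $\Delta^{m+1}f=g$.

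For the ``only if'' direction, set $h_k=\Delta^{m-k}f$ for $k=0,\dots,m$, so that $\Delta h_0=g$ and $\Delta h_{k}=h_{k-1}$. We show by induction on $k$ that
\[
h_k=\Sigma^{k+1}g+Q_k
\]
with $Q_k$ a polynomial of degree at most $k$.

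\emph{Base case $k=0$.} We have $g\in\cD^{p+m}\cap\cK^{p+m}$ and $h_0\in\cK^{p+m}$, so Proposition~\ref{prop:33-a7asd6f5Char} with $p+m$ in place of $p$ gives $h_0=\Sigma g+c$.

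\emph{Inductive step.} Write $\Delta(h_{k+1}-\Sigma^{k+2}g)=Q_k$. Choose a polynomial $R$ of degree at most $k+1$ with $\Delta R=Q_k$. Then $\phi=h_{k+1}-R-\Sigma^{k+2}g$ satisfies $\Delta\phi=0$, so $\phi$ is $1$-periodic. To conclude that $\phi$ is constant, apply Proposition~\ref{prop:33-a7asd6f5Char} to the function $\Sigma^{k+1}g+Q_k$, viewing $h_{k+1}$ as a solution of $\Delta f=\Sigma^{k+1}g+Q_k$. This requires $\Sigma^{k+1}g+Q_k$ to lie in $\cD^{q}\cap\cK^{q}$ for some $q$, and $h_{k+1}$ to lie in $\cK^q$. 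Take $q=p+m$. Then $\Sigma^{k+1}g\in\cD^{p+k+1}\subset\cD^{p+m}$ when $k<m$, and it lies in $\cK^{p+m}$ by the computation above. The polynomial $Q_k$, of degree at most $k\le m<p+m$, lies in $\cD^{p+m}$, and the convexity class is unaffected. Since $h_{k+1}\in\cK^{p+m}$ by hypothesis, Proposition~\ref{prop:33-a7asd6f5Char} yields $h_{k+1}=\Sigma(\Sigma^{k+1}g+Q_k)+c$. It then remains to note that $\Sigma$ is additive on these functions, with $\Sigma Q_k$ a polynomial of degree at most $k+1$. This follows from linearity of the limit formula \eqref{eq:lim1}, or directly from uniqueness, since $R-R(1)$ is a polynomial solution lying in $\cK^{p+m}$. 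At $k=m$ this gives \eqref{eq:PrPmSg4}.

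The last claim follows from \eqref{lemma:GaussLim46597m}. The function $\Sigma^{m+1}g$ vanishes at $1,\dots,m+1$, so the conditions $f(k+1)=0$ for $k=0,\dots,m$ force $P_m$ to have $m+1$ roots. Since $\deg P_m\le m$, we get $P_m=0$.

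The main obstacle is the bookkeeping of the indices in the $\cD$ and $\cK$ classes. In particular, one must secure $\Sigma^{j}g\in\cK^{p+m}$ for all $j\le m+1$, not merely $\cK^{p+j-1}$, by the re-indexing trick in Proposition~\ref{prop:SuffCondDomKm44}.
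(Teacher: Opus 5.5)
Your proof is correct and is essentially the paper's argument: the paper inducts on $m$, applies the induction hypothesis to $\Delta f$, and then uses Proposition~\ref{prop:33-a7asd6f5Char} once more together with linearity of $\Sigma$ on polynomials; unrolled, this is exactly your ladder $h_k=\Delta^{m-k}f$, and your re-indexed use of Proposition~\ref{prop:SuffCondDomKm44} spells out the membership $\Sigma^{m+1-k}g\in\cK^{p+m}$ that the paper calls clear. One small slip: for $p=0$ the chain $k\le m<p+m$ is false as written, but in the inductive step you have $k<m\le p+m$, which is all you need for $Q_k\in\cD^{p+m}$.
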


\begin{proof}
Let us establish the equivalence; the second statement will follow immediately.

(Sufficiency) Suppose that $f=P_m+\Sigma^{m+1}g$ for some polynomial $P_m$ of degree at most $m$. Then, for $k=0,1,\ldots,m$, the function
$$
\Delta^kf ~=~ \Delta^kP_m+\Sigma^{m+1-k}g
$$
necessarily lies in $\cK^{p+m}$. Indeed, this is clearly the case for the function $\Sigma^{m+1-k}g$. Moreover, the polynomial $\Delta^kP_m$ is of degree at most $m-k$ and therefore lies in both $\cK^{m-k+q}_1$ and $\cK^{m-k+q}_{-1}$ for all $q\in\N$.

(Necessity) Let us proceed by induction on $m\in\N$. By Proposition~\ref{prop:33-a7asd6f5Char}, the result holds for $m=0$. Suppose that it holds for some $m\in\N$ and let us show that it holds for $m+1$. Let $g\in\cD^p\cap\cK^{p+m+1}$ for some $p,m\in\N$, and let $f\colon\R_+\to\R$ be a solution to the equation $\Delta^{m+2}f=g$ such that $\Delta^k f$ lies in $\cK^{p+m+1}$ for $k=0,1,\ldots,m+1$. We only need to show that
$$
f ~=~ P_{m+1}+\Sigma^{m+2}g
$$
for some polynomial $P_{m+1}$ of degree at most $m+1$.

We first observe that the function $f_1=\Delta f$ satisfies $\Delta^{m+1}f_1=g$ and is such that $\Delta^kf_1$ lies in $\cK^{p+m+1}\subset\cK^{p+m}$ for $k=0,\ldots,m$. By the induction hypothesis, there exists a polynomial $P_m$ of degree at most $m$ such that
$$
\Delta f ~=~ f_1 ~=~ P_m+\Sigma^{m+1}g.
$$
Now, since
$$
P_m+\Sigma^{m+1}g ~\in ~\cD^{p+m+1}\cap\cK^{p+m+1}\qquad\text{and}\qquad f ~\in ~\cK^{p+m+1},
$$
by Proposition~\ref{prop:33-a7asd6f5Char} it follows that
$$
f ~=~ c+\Sigma(P_m+\Sigma^{m+1}g),
$$
for some $c\in\R$. Equivalently, we have (see \cite[Proposition 5.7]{MarZen22})
$$
f ~=~ P_{m+1}+\Sigma^{m+2}g
$$
for some polynomial $P_{m+1}$ of degree at most $m+1$. This completes the proof.
\end{proof}

It is noteworthy that the polynomial $P_m$ in Proposition~\ref{prop:326-Ext-33z} can be expressed in terms of the function $f$ as
\begin{equation}\label{eq:PolNew554}
P_m(x) ~=~ \sum_{j=0}^m\tchoose{x-1}{j}\,\Delta^jf(1).
\end{equation}
Indeed, using \eqref{lemma:GaussLim46597m} and \eqref{eq:PrPmSg4}, we see that $f(n)=P_m(n)$ for $n=1,\ldots,m+1$. Therefore, the Newton form of $P_m(x)$ at $x=1$ is
$$
P_m(x) ~=~ \sum_{j=0}^m\tchoose{x-1}{j}\,\Delta^jP_m(1) ~=~ \sum_{j=0}^m\tchoose{x-1}{j}\,\Delta^jf(1).
$$

We now derive an important corollary, which reminds us that the maps $\Delta$ and $\Sigma$ do not commute, and hence neither do $\Delta^{m+1}$ and $\Sigma^{m+1}$. First, we recall from \cite[p.\ 10]{MarZen22} that, for any $a>0$, any $p\in\N$, and any $f\colon\R_+\to\R$, the function $\rho_a^p[f]\colon (-a,\infty)\to\R$ is defined by
$$
\rho_a^p[f](x) ~=~ f(x+a)-\sum_{j=0}^{p-1}\tchoose{x}{j}\,\Delta^jf(a)\qquad (x>-a).
$$
Clearly, this definition is consistent with the identity in \eqref{eq:Gau4}.

\begin{corollary}\label{cor:326-Ext-33zc}
If a function $g$ lies in $\dom\Sigma^{m+1}$ for some $m\in\N$, then we have
$$
\Delta^{m+1}\Sigma^{m+1}g(x) ~=~ g(x)\qquad (x>0).
$$
If a function $f$ lies in $\cD^{p+m+1}\cap\cK^{p+2m+1}$ for some $m,p\in\N$, then we have
$$
\Sigma^{m+1}\Delta^{m+1}f(x) ~=~ \rho_1^{m+1}[f](x-1)\qquad (x>0).
$$
\end{corollary}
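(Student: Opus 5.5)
The first identity follows by induction on $m$ from the defining property $\Delta\Sigma h=h$ for every $h\in\dom\Sigma$. If $g\in\dom\Sigma^{m+1}$, then $\Sigma^m g\in\dom\Sigma$ by Proposition~\ref{prop:338-domSm}(ii), so $\Delta\Sigma^{m+1}g=\Sigma^m g$. Applying $\Delta$ repeatedly and using $\Sigma^k g\in\dom\Sigma$ for each $k\le m$, we get $\Delta^{m+1}\Sigma^{m+1}g=\Delta^m\Sigma^m g=\cdots=g$.

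For the second identity, set $g=\Delta^{m+1}f$. The plan is to apply the characterization in Proposition~\ref{prop:326-Ext-33z}. First we check that $g\in\cD^p\cap\cK^{p+m}$. Since $f\in\cK^{p+2m+1}$, repeated use of Proposition~\ref{prop:ffprime32} (second bullet, applied on a right half-line where $f$ is convex or concave of that order, possibly after replacing $f$ by $-f$) gives $\Delta^{k}f\in\cK^{p+2m+1-k}\subset\cK^{p+m}$ for $k=0,\dots,m+1$. In particular $g\in\cK^{p+m}$, and $\Delta^k f\in\cK^{p+m}$ for $k=0,\dots,m$. For the asymptotic condition, $\Delta^p g(n)=\Delta^{p+m+1}f(n)\to 0$ because $f\in\cD^{p+m+1}$, so $g\in\cD^p$. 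Thus $f$ solves $\Delta^{m+1}f=g$ and satisfies the convexity hypotheses of Proposition~\ref{prop:326-Ext-33z}. Hence $f=P_m+\Sigma^{m+1}g$ for some polynomial $P_m$ of degree at most $m$, and by \eqref{eq:PolNew554} we have $P_m(x)=\sum_{j=0}^m\tchoose{x-1}{j}\Delta^jf(1)$.

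It remains to note that
$$
\Sigma^{m+1}\Delta^{m+1}f(x) ~=~ f(x)-\sum_{j=0}^{m}\tchoose{x-1}{j}\,\Delta^jf(1),
$$
which is exactly $\rho_1^{m+1}[f](x-1)$ by the definition of $\rho_a^p$ with $a=1$, $p=m+1$, evaluated at $x-1>-1$.

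The main point needing care is the convexity bookkeeping. One has to confirm that the eventual $p$-convexity is inherited by $\Delta$ with the order dropping by exactly one, and that the $m+1$ differences still leave order at least $p+m$; the assumption $\cK^{p+2m+1}$ is designed precisely for this. The rest is direct substitution.
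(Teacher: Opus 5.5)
Your proof is correct and follows the same route as the paper. The first identity is the iteration of $\Delta\Sigma h=h$. The second comes from applying Proposition~\ref{prop:326-Ext-33z} to $g=\Delta^{m+1}f$, combined with the Newton form \eqref{eq:PolNew554} of $P_m$. The only difference is in how you check $g\in\cD^p\cap\cK^{p+m}$ and $\Delta^kf\in\cK^{p+m}$: you get the $\cD^p$ part from $\Delta^pg=\Delta^{p+m+1}f$ and the convexity part from the second bullet of Proposition~\ref{prop:ffprime32}, whereas the paper just cites Proposition~\ref{prop:ffprime324}. Your order bookkeeping, which reaches $\cK^{p+m}$ rather than only $\cK^p$, is the more precise of the two.
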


\begin{proof}
The first part is trivial. Now suppose that the function $f$ lies in $\cD^{p+m+1}\cap\cK^{p+2m+1}$ for some $m,p\in\N$. By Proposition~\ref{prop:ffprime324}, the function $g=\Delta^{m+1}f$ lies in $\cD^p\cap\cK^{p+m}$ and the function $\Delta^kf$ lies in $\cK^{p+m}$ for $k=0,1,\ldots,m$. Then, by Proposition~\ref{prop:326-Ext-33z} and the subsequent remark, we have
$$
\Sigma^{m+1}g ~=~ f-P_m\, ,
$$
where the polynomial $P_m$ is given in \eqref{eq:PolNew554}. Consequently,
$$
\Sigma^{m+1}g(x) ~=~ f(x-1+1)-\sum_{j=0}^m\tchoose{x-1}{j}\,\Delta^jf(1) ~=~ \rho_1^{m+1}[f](x-1).
$$
This completes the proof.
\end{proof}

Setting $m=0$ in Corollary~\ref{cor:326-Ext-33zc} immediately gives the following two special cases:
\begin{itemize}
\item\emph{If a function $g$ lies in $\dom\Sigma$, then we have
$$
\Delta\Sigma g(x) ~=~ g(x)\qquad (x>0).
$$
}
\item\emph{If a function $f$ lies in $\cD^p\cap\cK^p$ for some $p\in\N^*$, then we have
$$
\Sigma\Delta f(x) ~=~ f(x)-f(1)\qquad (x>0).
$$
}
\end{itemize}

\begin{remark}
Expanding the difference operator in the first identity of Corollary~\ref{cor:326-Ext-33zc} yields, for any $g\in\dom\Sigma^{m+1}$,
$$
g(x) ~=~ \sum_{k=0}^{m+1}\tchoose{m+1}{k}\, (-1)^{m+1-k}\,\Sigma^{m+1}g(x+k)\qquad (x>0).
$$
This identity provides a simple linear expression of $g$ in terms of $\Sigma^{m+1}g$.
\end{remark}

\begin{remark}
The second part of Corollary~\ref{cor:326-Ext-33zc} admits the following reformulation, which provides an interesting representation of the function $\rho_a^{m+1}[f]$:

\emph{If a function $f$ lies in $\cD^{p+m+1}\cap\cK^{p+2m+1}$ for some $m,p\in\N$, then we have}
$$
\big[\Sigma_t^{m+1}\Delta_t^{m+1}f(t+a-1)\Big]_{t=x+1} ~=~ \rho_a^{m+1}[f](x)\qquad (x>0).
$$
Indeed, defining $t\mapsto h(t)=f(t+a-1)$, we clearly have $h\in\cD^{p+m+1}\cap\cK^{p+2m+1}$. Applying the second part of Corollary~\ref{cor:326-Ext-33zc} to the function $h$ therefore yields
$$
\big[\Sigma_t^{m+1}\Delta_t^{m+1}h(t)\big]_{t=x+1} ~=~ \rho_1^{m+1}[h](x) ~=~ \rho_a^{m+1}[f](x)\qquad (x>0),
$$
which completes the proof.
\end{remark}

We now illustrate the main preceding results using the polygamma function
$$
\psi_{-2}(x) ~=~ \int_0^x\ln\Gamma(t)\, dt ~=~ \int_0^x\ln G_1(t)\, dt\qquad (x>0),
$$
when restricted to the set $\R_+$. Note that this function has recently been studied in depth in connection with the generalization of the additive form of the Bohr-Mollerup theorem (see the tutorial paper \cite{MarZen24}).

\begin{example}[The Polygamma Function $\psi_{-2}$]\label{ex:324-Poly2G639}
Consider the function $f_1\colon\R_+\to\R$ defined by the equation
$$
f_1(x) ~=~ \psi_{-2}(x)-\psi_{-2}(1)\qquad (x>0),
$$
or equivalently (see \cite{MarZen24}),
$$
f_1(x) ~=~ \Sigma_x(x\ln x-x+\psi_{-2}(1)),
$$
where the function $x\mapsto x\ln x-x+\psi_{-2}(1)$ lies in $\cD^2\cap\cK^{\infty}$ and
$$
\psi_{-2}(1) ~=~ \frac{1}{2}\ln(2\pi).
$$
We note that $f_1(1)=0$ while $f_1(2)\neq 0$. By \eqref{lemma:GaussLim46597m}, there is therefore no function $g\colon\R_+\to\R$ such that $f_1=\Sigma^2g$. However, this issue can be addressed by replacing $f_1$ with the function $f=\Sigma^2\Delta^2f_1$. Applying Corollary~\ref{cor:326-Ext-33zc} with $m=p=1$, we then obtain
\begin{eqnarray*}
f(x) &=& f_1(x)-f_1(1)-(x-1)\,\Delta f_1(1)\\
&=& \psi_{-2}(x)-1+(1-\psi_{-2}(1)){\,}x\qquad (x>0).
\end{eqnarray*}
Thus $f(x)=\Sigma^2g(x)$, where the function $g\colon\R_+\to\R$ is given by
$$
g(x) ~=~ \Delta^2 f_1(x) ~=~ -1+\Delta_x(x\ln x),
$$
and lies in $\cD^1\cap\cK^{\infty}$.
\end{example}

\section{A Generalization of Malmst\'en's Formula}

The following identities yield well-known integral representations of both the logarithm function and the log-gamma function on $\R_+$: 
\begin{eqnarray*}
\ln x &=& \int_0^{\infty}\big(1-e^{(1-x)t}\big)\,\frac{e^{-t}}{t}{\,}dt,\\
\ln\Gamma(x) &=& \int_0^{\infty}\Big((x-1)-\frac{e^{(1-x)t}-1}{e^{-t}-1}\Big)\,\frac{e^{-t}}{t}{\,}dt.
\end{eqnarray*}
The latter identity is commonly referred to as the \emph{Malmst\'en's formula}. For background, see for instance Erd\'elyi {\em et al.}~\cite[pp.\ 16, 20--21]{ErdMagOberTri81}.

Following a suggestion of N.~Zena\"idi (University of Li\`ege, \emph{Personal communication}, 2025), we now show how Malmst\'en's formula can be generalized by providing, for every $m\in\N$, an integral representation of the function 
$$
\ln G_{m+1}(x) ~=~ \Sigma_x^{m+1}\ln x\qquad (x>0).
$$
This generalization is stated in the following theorem. As an illustration, we derive the following integral representation of the logarithm of the Barnes function:
$$
\ln G(x) ~=~ \ln G_2(x) ~=~ \int_0^{\infty}\Big(\tchoose{x-1}{2}-\frac{e^{(1-x)t}-1}{(e^{-t}-1)^2}
+\frac{x-1}{e^{-t}-1}\Big)\,\frac{e^{-t}}{t}{\,}dt.
$$

\begin{theorem}[Generalization of Malmst\'en's Formula]
For any $m\in\N$, the function $\ln\circ{\,}G_{m+1}$ admits the following integral representation on $\R_+$
\begin{equation}\label{eq:329-logGmIntRe7}
\ln G_{m+1}(x) ~=~ \int_0^{\infty}\Sigma_x^{m+1}\big(1-e^{(1-x)t}\big)\,\frac{e^{-t}}{t}{\,}dt\qquad (x>0),
\end{equation}
where, for any $t>0$,
\begin{equation}\label{eq:329-logGmIntRe7bi}
\Sigma_x^{m+1}(1-e^{(1-x)t}) ~=~ \tchoose{x-1}{m+1}-\frac{e^{(1-x)t}-1}{(e^{-t}-1)^{m+1}}
+\sum_{j=1}^m\frac{\tchoose{x-1}{j}}{(e^{-t}-1)^{m+1-j}}{\,}.
\end{equation}
\end{theorem}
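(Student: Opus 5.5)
I will prove the theorem in two stages. First I establish the closed form \eqref{eq:329-logGmIntRe7bi} for fixed $t>0$. Then I show that the right-hand side of \eqref{eq:329-logGmIntRe7} satisfies the characterization of $\Sigma^{m+1}_x\ln x$ given in Proposition~\ref{prop:326-Ext-33z}.

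\emph{Closed form.} Fix $t>0$ and let $h_t(x)=1-e^{(1-x)t}$. Write $F_t(x)$ for the right-hand side of \eqref{eq:329-logGmIntRe7bi} and put $q=e^{-t}\in(0,1)$. Since $e^{(1-x)t}=e^tq^x$ and $\Delta_x q^x=(q-1)q^x$, I obtain
$$
\Delta^{m+1}_x\frac{e^{(1-x)t}-1}{(q-1)^{m+1}}=e^{(1-x)t}.
$$
Every other term of $F_t$ is a polynomial of degree at most $m+1$ whose $(m+1)$th difference is $1$ (from $\tchoose{x-1}{m+1}$) or $0$. Hence $\Delta^{m+1}F_t=h_t$. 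Next, $F_t(k+1)=0$ for $k=0,\ldots,m$: the function $(q^{x-1}-1)/(q-1)^{m+1}$ evaluated at $x=k+1$ equals $\sum_{j=1}^{k}\tchoose{k}{j}(q-1)^{j-m-1}$ by the binomial theorem, and this cancels exactly against $\sum_j\tchoose{k}{j}/(q-1)^{m+1-j}$.

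Finally, $h_t$ lies in $\cD^0\cap\cK^\infty$, since it tends to $1$ (so $\Delta^0$ does not tend to $0$; in fact it lies in $\cD^1$) and all its derivatives have constant sign. Each $\Delta^kF_t$ is an exponential term plus a polynomial, so it lies in $\cK^{1+m}$. Proposition~\ref{prop:326-Ext-33z} with $p=1$ then gives $F_t=\Sigma^{m+1}h_t$. As a sanity check, $m=0$ recovers Malmst\'en's integrand.

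\emph{Integral representation.} Define $\Phi(x)=\int_0^\infty F_t(x)\,e^{-t}t^{-1}\,dt$. I will verify three things.

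First, the integral converges. As $t\to0$, $q-1\sim -t$, and $F_t(x)$ is bounded near $t=0$: the singular terms cancel because $F_t(x)$ is a rescaled Newton-type remainder. Indeed, expanding $q^{x-1}=\sum_j\tchoose{x-1}{j}(q-1)^j$ shows
$$
F_t(x)=\tchoose{x-1}{m+1}-\sum_{j\ge m+1}\tchoose{x-1}{j}(q-1)^{j-m-1}=O(t),
$$
so this series cancels the $1/t$ factor. As $t\to\infty$, $F_t(x)$ grows at most like $e^{(1-x)t}$ or polynomially in $1/(q-1)$, which is bounded, and $e^{-t}e^{(1-x)t}=e^{-xt}$ keeps the integral convergent.

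Second, $\Delta^{m+1}\Phi=\ln x$, by differencing under the integral sign (finite differences are harmless) and the scalar Frullani/Malmst\'en identity for $\ln x$ quoted above. Third, $\Phi(k+1)=0$ for $k=0,\ldots,m$, which is immediate from $F_t(k+1)=0$.

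It then remains to check that $\Delta^k\Phi\in\cK^{1+m}$ for $k=0,\ldots,m$, so that Proposition~\ref{prop:326-Ext-33z} (with $g=\ln$, $p=1$, using Proposition~\ref{prop:MulGamFuct5}) yields $\Phi=\Sigma^{m+1}\ln=\ln G_{m+1}$. For this I differentiate $m+2$ times under the integral. The derivative $D_x^{m+2}$ kills the polynomial part and gives
$$
D^{m+2}_x F_t(x)=-(-t)^{m+2}\,\frac{e^{(1-x)t}}{(q-1)^{m+1}}=\frac{t^{m+2}e^{(1-x)t}}{(1-q)^{m+1}}\cdot(\pm1),
$$
with a sign depending only on $m$. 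So $D^{m+2}\Delta^k\Phi$ has constant sign on $\R_+$, and Proposition~\ref{prop:ffprime32} gives $\Delta^k\Phi\in\cK^{m+1}$ (even on all of $\R_+$). Alternatively, I can compare with the last condition in Proposition~\ref{prop:MulGamFuct4}.

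The main obstacle is justifying differentiation under the integral sign uniformly on compact subsets of $\R_+$, especially near $t=0$, where the individual terms blow up like $t^{-(m+1)}$. I would handle this by using the cancelled (remainder) form of $F_t$ near $0$: bound it via Lemma~\ref{lemma:Taylo68IntFReD}-type remainders in $q-1$, and dominate by $Ct^{m+2}e^{-t\min x}/t^{m+1}$ for the derivatives.
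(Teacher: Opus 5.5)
Your proposal is correct and follows essentially the same route as the paper. You obtain the closed form for $\Sigma^{m+1}_x(1-e^{(1-x)t})$ from the characterization in Proposition~\ref{prop:326-Ext-33z} with $p=1$; the paper phrases this step through Corollary~\ref{cor:326-Ext-33zc}, which amounts to the same thing. You then identify the integral with $\ln G_{m+1}$ by checking $\Delta^{m+1}$, the zeros at $1,\ldots,m+1$, and the constant sign of $D^{m+2}\Delta^k$ under the integral sign, exactly as the paper does. The only slip is your passing claim that $1-e^{(1-x)t}$ lies in $\cD^0$: it is $e^{(1-x)t}$ that does, while the function itself lies in $\cD^1$, as you note and use. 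Your explicit treatment of convergence, and of the justification for differentiating under the integral near $t=0$, is more detailed than the paper's, which simply asserts both.
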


\begin{proof}
We first observe that, for any $t>0$, the function $x\mapsto e^{(1-x)t}$ lies in $\cD^0\cap\cK^{\infty}$ and that
$$
1-e^{(1-x)t} ~=~ \Delta_x^{m+1}\tchoose{x-1}{m+1}-\Delta_x^{m+1}\frac{e^{(1-x)t}}{(e^{-t}-1)^{m+1}}{\,}.
$$
Using Corollary~\ref{cor:326-Ext-33zc} with $p=1$ and
$$
f(x) ~=~ \tchoose{x-1}{m+1}-\frac{e^{(1-x)t}}{(e^{-t}-1)^{m+1}}{\,},
$$
we can easily derive the identity in \eqref{eq:329-logGmIntRe7bi}.

The integral representation \eqref{eq:329-logGmIntRe7} then follows from an application of the second part of Proposition~\ref{prop:326-Ext-33z} to the function $g(x)=\ln x$, which lies in $\cD^1\cap\cK^{\infty}$. Indeed, let $I_{m+1}(x)$ denote the right-hand integral in \eqref{eq:329-logGmIntRe7}. We clearly have
$$
\Delta^{m+1}I_{m+1}(x) ~=~ \int_0^{\infty}\big(1-e^{(1-x)t}\big)\,\frac{e^{-t}}{t}{\,}dt ~=~ \ln x\qquad (x>0),
$$
and, cf.\ \eqref{lemma:GaussLim46597m},
$$
I_{m+1}(1) ~=~ \cdots ~=~ I_{m+1}(m+1) ~=~ 0.
$$
Note that in this computation all integrals involved are well defined.

To conclude the proof, it suffices to show that, for $k=0,\ldots,m$, the function $\Delta^kI_{m+1}$ lies in $\cK^{m+1}$, or equivalently, that the function $D^{m+2}\Delta^kI_{m+1}$ eventually has a constant sign. However, for every $t>0$, the function
$$
D^{m+2}_x\,\Sigma_x^{m+1-k}\big(1-e^{(1-x)t}\big) ~=~ -D^{m+2}_x\,\frac{e^{(1-x)t}}{(e^{-t}-1)^{m+1-k}} ~=~ -\frac{(-t)^{m+2}\, e^{(1-x)t}}{(e^{-t}-1)^{m+1-k}}
$$
clearly has a constant sign, and hence so does the function $D^{m+2}\Delta^kI_{m+1}$ (here we may differentiate under the integral sign). This completes the proof.
\end{proof}

\section{Further Examples}

Beyond the multiple gamma functions, there are numerous examples that illustrate our results on repeated principal indefinite sums. In this section, we explore several of these examples.

\subsection{Polynomial functions}

We begin by considering the case in which the function $g$ is a polynomial. As explained in Graham \emph{et al}.\ \cite[p.\ 189]{GraKnuPat94}, in this discrete setting it is often more convenient to express polynomials in their \emph{Newton forms}, that is, as linear combinations of binomial coefficients. More precisely, any polynomial $P_q$ of degree at most $q\in\N$ admits the representation
$$
P_q(x) ~=~ \sum_{k=0}^q\tchoose{x}{k}\,\Delta^kP_q(0).
$$

Now, since such a polynomial $P_q$ lies in both $\cK^q_1$ and $\cK^q_{-1}$, it follows that, for any $m\in\N$, we may write (cf.\ \cite[Proposition 5.7]{MarZen22})
\begin{equation}\label{eq:pol446}
\Sigma^{m+1} P_q(x) ~=~ \sum_{k=0}^q\big(\Sigma^{m+1}_x\tchoose{x}{k}\big)\,\Delta^kP_q(0)\qquad (x>0).
\end{equation}
To compute the right-hand side of \eqref{eq:pol446}, it therefore suffices to derive an explicit expression for $\Sigma^{m+1}_x\tchoose{x}{k}$. The following proposition provides such an expression. Its proof follows straightforwardly from Propositions~\ref{prop:SuffCondDomKm44} and \ref{prop:33-a7asd6f5Char}.

\begin{proposition}\label{prop:pol2}
For any $k,m\in\N$, the function $g_k(x)=\tchoose{x}{k}$ lies in $\dom\Sigma^{m+1}$ and we have
$$
\Sigma^{m+1} g_k(x) ~=~ \tchoose{x-0^k}{k+m+1} ~=~ 
\begin{cases}
{\tchoose{x-1}{m+1}}_{\mathstrut}{\,}, & \text{if $k=0$},\\
\tchoose{x}{k+m+1}{\,}, & \text{if $k\geq 1$}.
\end{cases}
$$
Moreover, the function $\Sigma^{m+1}g_k$ lies in $\cD^{k+m+2}\cap\cK^{\infty}$.
\end{proposition}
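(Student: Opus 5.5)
The plan is to show first that $g_k$ lies in the domain, and then to identify the iterates one step at a time using the uniqueness statement of Proposition~\ref{prop:33-a7asd6f5Char}. Since $g_k$ is a polynomial of degree $k$, we have $\Delta^{k+1}g_k=0$, so $g_k\in\cD^{k+1}$. Every polynomial of degree at most $q$ lies in both $\cK^q_1$ and $\cK^q_{-1}$, so $g_k\in\cK^{\infty}$; in particular $g_k\in\cK^{k+1+m}$. Proposition~\ref{prop:SuffCondDomKm44} with $p=k+1$ then shows that $\Sigma^{m+1}g_k$ exists. This step carries no difficulty.

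For the formula I argue by induction on $m$, writing $h_m(x)=\tchoose{x-0^k}{k+m+1}$. I first check the one-step case. For $k\geq 1$, Pascal's rule gives $\Delta_x\tchoose{x}{k+1}=\tchoose{x}{k}$, and $\tchoose{1}{k+1}=0$. For $k=0$, we have $\Delta_x\tchoose{x-1}{1}=1=g_0$, and $\tchoose{0}{1}=0$. The candidate is a polynomial, hence lies in $\cK^{k+1}$. So Proposition~\ref{prop:33-a7asd6f5Char}, applied with $p=k+1$, identifies it as $\Sigma g_k$.

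For the inductive step, assume $\Sigma^{m}g_k=h_{m-1}$, which is the polynomial $\tchoose{x-0^k}{k+m}$ of degree $k+m$. It lies in $\cD^{k+m+1}\cap\cK^{k+m+1}$. By Pascal's rule, $\Delta h_m=h_{m-1}$. Also $h_m(1)=\tchoose{1-0^k}{k+m+1}=0$, because $1-0^k\in\{0,1\}$ and $k+m+1\geq 2$. Since $h_m$ is a polynomial, it lies in $\cK^{k+m+1}$. Proposition~\ref{prop:33-a7asd6f5Char} then gives $\Sigma^{m+1}g_k=\Sigma h_{m-1}=h_m$.

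Alternatively, one could verify directly that $\Delta^{m+1}h_m=g_k$, that $h_m(1)=\cdots=h_m(m+1)=0$ (all these arguments lie below the top index $k+m+1$ and are nonnegative), and that every $\Delta^j h_m$ is a polynomial. Proposition~\ref{prop:326-Ext-33z} would then conclude in one step.

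For the final assertion, $h_m$ is a polynomial of degree $k+m+1$. Hence $\Delta^{k+m+2}h_m=0$, giving membership in $\cD^{k+m+2}$, and polynomials lie in $\cK^{\infty}$. I expect no real obstacle. The only point needing care is the convention $0^0=1$, which produces the shift $x-1$ when $k=0$. That shift is exactly what makes the vanishing at $1$ hold for the constant function: $\tchoose{x}{m+1}$ would vanish at $1$ only when $m\geq 1$.
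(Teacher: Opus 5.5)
Your proof is correct and is essentially the paper's argument. The paper simply states that the result follows from Proposition~\ref{prop:SuffCondDomKm44} (existence, here with $p=k+1$) and Proposition~\ref{prop:33-a7asd6f5Char} (identifying each successive sum via $\Delta h_m=h_{m-1}$, $h_m(1)=0$, and polynomials lying in $\cK^{\infty}$); your induction supplies exactly those details, and your alternative one-step route through Proposition~\ref{prop:326-Ext-33z} is also valid.
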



Now, in light of Proposition~\ref{prop:pol2}, the identity in \eqref{eq:pol446} can be rewritten more explicitly as
$$
\Sigma^{m+1} P_q(x) ~=~ \tchoose{x-1}{m+1}\, P_q(0)+\sum_{k=1}^q\tchoose{x}{k+m+1}\,\Delta^kP_q(0)\qquad (x>0).
$$
It follows immediately that if the polynomial $P_q$ is of degree $q$, then $\Sigma^{m+1} P_q$ is a polynomial of degree $q+m+1$. Moreover, its coefficients can be easily computed using the standard conversion formulas between ordinary powers and binomial coefficients (see, e.g., \cite[p.\ 264]{GraKnuPat94}).

\subsection{The Reciprocal Function}

After discussing polynomials, we now turn to certain special rational functions. The first function we consider is the reciprocal function $g(x)=1/x$. As the following proposition shows, its repeated principal indefinite sum involves the \emph{digamma} function $\psi\colon\R_+\to\R$, which admits the following series representation
$$
\psi(x) ~=~ -\gamma+\sum_{k=1}^{\infty}\Big(\frac{1}{k}-\frac{1}{x+k-1}\Big)\qquad (x>0),
$$
where $\gamma$ denotes Euler's constant. For background on the digamma function, see for instance \cite{MarZen22,SriCho12}.

\begin{proposition}\label{prop:RecF32}
For any $m\in\N$, the reciprocal function $g(x)=1/x$ lies in $\dom\Sigma^{m+1}$ and we have
$$
\Sigma^{m+1}g(x) ~=~ \tchoose{x-1}{m}\, (\psi(x)-\psi(m+1))\qquad (x>0).
$$
Moreover, the function $\Sigma^{m+1}g$ lies in $\cD^{m+1}\cap\cK^{\infty}$.
\end{proposition}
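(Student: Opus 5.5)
Since $g(x)=1/x$ is completely monotone on $\R_+$, every derivative $D^n g$ has the constant sign $(-1)^n$. By Proposition~\ref{prop:ffprime32}, this places $g$ in $\cK^{q}$ for every $q$, and clearly $g\in\cD^0$. Thus $g\in\cD^0\cap\cK^{\infty}$, and Proposition~\ref{prop:SuffCondDomKm44} (with $p=0$) gives $g\in\dom\Sigma^{m+1}$ for every $m$, with $\Sigma^{m+1}g\in\cD^{m+1}$. It remains to identify $\Sigma^{m+1}g$ and to show that it lies in $\cK^{\infty}$. For the identification I will use the characterization in Proposition~\ref{prop:326-Ext-33z} with $p=0$. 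Set
$$
f_m(x)~=~\tchoose{x-1}{m}\,(\psi(x)-\psi(m+1)).
$$
I must check three things: $\Delta^{m+1}f_m=g$; $f_m(k+1)=0$ for $k=0,\ldots,m$; and $\Delta^kf_m\in\cK^{m}$ for $k=0,\ldots,m$.

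\textbf{Difference identity.} I argue by induction, showing that $\Delta f_m=f_{m-1}$ for $m\geq 1$, with $\Delta f_0=1/x$ because $\psi(x+1)-\psi(x)=1/x$. Applying the product rule for differences,
$$
\Delta f_m(x)~=~\tchoose{x-1}{m-1}(\psi(x)-\psi(m+1))+\tchoose{x}{m}\frac1x.
$$
Since $\tchoose{x}{m}\frac1x=\frac1m\tchoose{x-1}{m-1}$ and $\psi(m+1)-\psi(m)=1/m$, the right-hand side collapses to $\tchoose{x-1}{m-1}(\psi(x)-\psi(m))=f_{m-1}(x)$. Iterating gives $\Delta^kf_m=f_{m-k}$ and $\Delta^{m+1}f_m=g$. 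The vanishing conditions are immediate: $\tchoose{x-1}{m}=0$ at $x=1,\ldots,m$, and $\psi(x)-\psi(m+1)=0$ at $x=m+1$.

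\textbf{Convexity.} This is the main step. Since $\Delta^kf_m=f_{m-k}$, it suffices to show that each $f_j$ lies in $\cK^{\infty}$, meaning that for every $q$ the derivative $D^{q+1}f_j$ eventually has constant sign. By Leibniz's rule,
$$
D^{q+1}f_j~=~\sum_i\tchoose{q+1}{i}\,D^{i}\tchoose{x-1}{j}\,D^{q+1-i}(\psi-\psi(j+1)).
$$
The leading behaviour comes from the asymptotics $\psi(x)=\ln x+O(1/x)$ and $\psi^{(r)}(x)\sim(-1)^{r-1}(r-1)!\,x^{-r}$. Each term is therefore of order $x^{j-q-1}$, possibly with a log factor. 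When $q+1\le j$, the term $x^{j}\ln x$ dominates and gives a sign determined by the leading coefficient. When $q+1>j$, all terms are of order $x^{j-q-1}$, and the sign is that of an explicit nonzero constant; I would need to check that this constant does not vanish. An alternative that avoids this check is to write $f_j$ as a polynomial times $\ln x$ plus an error with full asymptotic expansion, using the integral representation $\psi(x)=\ln x+\int_0^\infty(\frac1t-\frac1{1-e^{-t}})e^{-xt}\,dt$. Then $D^{q+1}$ of the polynomial part times $\ln x$ is exactly computable, with nonzero leading coefficient $(-1)^{q-j}j!\,(q-j)!/j!$ up to a positive factor, and it dominates the exponentially-weighted remainder. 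Completing this asymptotic sign argument cleanly is the step I expect to be hardest.

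\textbf{Conclusion.} With the three conditions verified, Proposition~\ref{prop:326-Ext-33z} gives $f_m=\Sigma^{m+1}g$. Together with the $\cD^{m+1}$ membership from Proposition~\ref{prop:SuffCondDomKm44}, this yields $\Sigma^{m+1}g\in\cD^{m+1}\cap\cK^{\infty}$. (The $\cD^{m+1}$ membership can also be checked directly from $\Delta^{m+1}f_m=1/x\to 0$.)
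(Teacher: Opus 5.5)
Your proof is correct, but it is organized differently from the paper's.

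First, Proposition~\ref{prop:SuffCondDomKm44} already gives $\cK^{\infty}$. Since $g\in\cD^0\subset\cD^p$ and $g\in\cK^{p+m}$ for every $p$, it yields $\Sigma^{m+1}g\in\cK^{p+m}$ for all $p$. The paper reads off both $\cD^{m+1}$ and $\cK^{\infty}$ there and never proves $\cK^\infty$ by hand.

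Second, for the identification the paper does not use Proposition~\ref{prop:326-Ext-33z}. It proves $\Sigma f_k=f_{k+1}$ by induction on $k$, using only the case $m=0$ (Proposition~\ref{prop:33-a7asd6f5Char}). By then $f_k=\Sigma^{k+1}g$ is known to lie in $\cD^{k+1}\cap\cK^{\infty}$, so the only convexity input needed is $f_{k+1}\in\cK^{k+1}$, i.e.\ $f_k\in\cK^k_1(\R_+)$. This holds on all of $\R_+$: write $f_k$ as a nonnegative combination of the functions $(-1)^{k+j}x^j\psi(x)$ ($0\le j\le k$) plus a polynomial of degree at most $k$, and differentiate the series for $\psi$ termwise to get $D^{k+1}f_k>0$.

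Your single appeal to Proposition~\ref{prop:326-Ext-33z} with $p=0$ instead demands $f_j\in\cK^m$ for every $j\le m$. That includes orders $q<j$, where $D^{q+1}f_j$ can change sign on $\R_+$ (e.g.\ $f_1'$ goes from $-\infty$ to $+\infty$) and only an eventual sign is available; this is what forces your asymptotics. They do close:
\begin{itemize}
\item For $q<j$, the term $x^{j-q-1}\ln x/(j-q-1)!$ dominates.
\item For $q\ge j$, your leading constant $(-1)^{q-j}(q-j)!$ is nonzero.
\item The remainder $\psi-\ln$ is $O(1/x)$, not exponentially small. Because the kernel $\frac1t-\frac1{1-e^{-t}}$ is bounded by $1$, it satisfies $|D^r(\psi-\ln)(x)|\le r!\,x^{-r-1}$, so its contribution is only $O(x^{j-q-2})$.
\end{itemize}

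So your route buys a single use of the general characterization, plus explicit control of every $D^{q+1}f_j$. It costs asymptotic work that the paper avoids. You could shed most of it by proceeding inductively: then $\Delta^kf_m=f_{m-k}=\Sigma^{m-k+1}g\in\cK^{\infty}$ is already known for $k\ge1$, and only $f_m\in\cK^m$ remains. That is exactly the global-sign case, since for $q\ge j$ termwise differentiation gives the exact formula $D^{q+1}f_j(x)=(-1)^{q+j}(q+1)!\sum_{n\ge 0}\tchoose{n+j}{j}(x+n)^{-q-2}$.
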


\begin{proof}
Since the function $g$ lies in $\cD^0\cap\cK^{\infty}$, Proposition~\ref{prop:SuffCondDomKm44} implies that $\Sigma^{m+1}g$ exists and lies in $\cD^{m+1}\cap\cK^{\infty}$. We now prove the stated identity. The case $m=0$ is well known \cite[Section 10.2]{MarZen22}, and we may therefore assume that $m\geq 1$. For any $k\in\N$, define the function $f_k\colon\R_+\to\R$ by
$$
f_k(x) ~=~ \tchoose{x-1}{k}\, (\psi(x)-\psi(k+1))\qquad (x>0).
$$
Using the classical product rule for the difference operator, it is straightforward to verify that $\Delta f_{k+1}=f_k$. Moreover, we have the following claim, which establishes an important convexity property of $f_k$.


\begin{claim}
For any $k\in\N$, the function $f_k$ lies in $\cK^k_1(\R_+)$.
\end{claim}

{
\renewcommand{\qedsymbol}{$\lozenge$}
\begin{proof}[Proof of the Claim]
For any $j,k\in\N$ with $j\leq k$, define the function $g_{j,k}\colon\R_+\to\R$ by
$$
g_{j,k}(x) ~=~ (-1)^{k+j}\, x^j\,\psi(x)\qquad (x>0).
$$
Using the series representation of $\psi(x)$, we obtain
$$
g_{j,k}^{(k+1)}(x) ~=~ (-1)^{k+j+1}\,\sum_{n=0}^{\infty}D_x^{k+1}\frac{x^j}{x+n} ~=~ (j+1)!\,\sum_{n=0}^{\infty}\frac{n^j}{(x+n)^{j+2}}{\,}.
$$
Since the latter expression is positive throughout $\R_+$, it follows from Proposition~\ref{prop:ffprime32} that the function $g_{j,k}$ lies in $\cK^k_1(\R_+)$.

Now, by expanding the binomial coefficient in the definition of $f_k$, we see that this function can be expressed in the form
$$
f_k(x) ~=~ \sum_{j=0}^kc_j\, g_{j,k}(x) + P_k(x)\qquad (x>0),
$$
where $c_0,\ldots,c_k\geq 0$ and $P_k$ is a polynomial of degree at most $k$. Consequently, each summand on the right-hand side lies in $\cK^k_1(\R_+)$, and therefore so does the function $f_k$ itself.
\end{proof}
}

To complete the proof, it suffices to show that the identity $\Sigma f_k=f_{k+1}$ holds for all $k\in\N$. We proceed by induction on $k$. The case $k=0$ is straightforward. Indeed, we have
$$
f_0 ~=~ \Sigma g ~\in ~\cD^1\cap\cK^{\infty} ~\subset ~ \cD^1\cap\cK^1,
$$
and by the Claim, $f_1\in\cK^1$. Consequently, Proposition~\ref{prop:33-a7asd6f5Char} implies that $\Sigma f_0=f_1$. Now suppose that the statement holds for some $k\in\N$ and let us show that it also holds for $k+1$. We have
$$
f_{k+1} ~=~ \Sigma f_k ~=~ \Sigma^{k+2}g ~\in ~\cD^{k+2}\cap\cK^{\infty} ~\subset ~\cD^{k+2}\cap\cK^{k+2},
$$
and by the Claim, $f_{k+2}\in\cK^{k+2}$. Therefore, $\Sigma f_{k+1}=f_{k+2}$, and the induction is complete.
\end{proof}

\begin{remark}
Interestingly, from Propositions~\ref{prop:pol2} and \ref{prop:RecF32}, we can easily derive the following identity
$$
\Sigma^{m+1}\psi(x) ~=~ \tchoose{x-1}{m+1}\,\big(\psi(x)-\psi(m+2)+\psi(1)\big)\qquad (x>0).
$$
Indeed, to obtain this identity, it suffices essentially to apply the map $\Sigma^{m+1}$ to the representation $\psi(x)=\psi(1)+\Sigma_x\frac{1}{x}$.
\end{remark}

\subsection{Powers of the Reciprocal Function}

We now continue our study of repeated principal indefinite sums of rational functions by considering the powers of the reciprocal function. In this regard, we note that, for any $r\in\N$,
\begin{equation}\label{eq:63Fr}
\frac{1}{x^{r+1}} ~=~ \frac{(-1)^r}{r!}\, D_x^r\frac{1}{x}\qquad (x>0). 
\end{equation}
This identity suggests that it is convenient to derive an explicit formula for the function $\Sigma^{m+1}g^{(r)}$ in terms of $D^r\Sigma^{m+1}g$.

In this context, the following proposition is particularly significant, as it allows us to generate new examples from existing ones. In fact, it is simply a straightforward generalization of the second part of Corollary~\ref{cor:326-Ext-33zc}.

\begin{proposition}\label{prop:GenLinOp3}
Let $T$ be a linear operator on the space of functions from $\R_+$ to $\R$ that commutes with $\Delta$, that is, $T\Delta =\Delta T$, and let $m,p\in\N$. Suppose that a function $f\in\dom\Sigma^{m+1}$ satisfies
$$
Tf\in\cD^p\cap\cK^{p+m}\quad\text{and}\quad T\Sigma^{m+1}f\in\cK^{p+2m}.
$$
Then
$$
\Sigma^{m+1}T f(x) ~=~ \rho_1^{m+1}[T\Sigma^{m+1}f](x-1)\qquad (x>0).
$$
\end{proposition}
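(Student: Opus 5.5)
We follow the proof of the second part of Corollary~\ref{cor:326-Ext-33zc}. Set $F=T\Sigma^{m+1}f$ and $g=Tf$. By hypothesis $g\in\cD^p\cap\cK^{p+m}$, so Proposition~\ref{prop:SuffCondDomKm44} guarantees that $\Sigma^{m+1}g$ exists. We will show that $F$ is a solution of $\Delta^{m+1}F=g$ satisfying the convexity requirement of Proposition~\ref{prop:326-Ext-33z}. That proposition will then give $F=P_m+\Sigma^{m+1}g$ for some polynomial $P_m$ of degree at most $m$, and we will identify $P_m$ through \eqref{eq:PolNew554}.

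The first step is the functional equation. Since $T$ commutes with $\Delta$, it commutes with $\Delta^{m+1}$. Using the first part of Corollary~\ref{cor:326-Ext-33zc} (applicable because $f\in\dom\Sigma^{m+1}$), we get
$$
\Delta^{m+1}F ~=~ T\Delta^{m+1}\Sigma^{m+1}f ~=~ Tf ~=~ g.
$$

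The second step, which is the main point, is to check that $\Delta^kF\in\cK^{p+m}$ for $k=0,1,\ldots,m$. From $F\in\cK^{p+2m}$, repeated use of Proposition~\ref{prop:ffprime32} (second assertion, on an interval $(a,\infty)$ where $F$ is $(p+2m)$-convex or $(p+2m)$-concave) gives $\Delta^kF\in\cK^{p+2m-k}$. Since $p+2m-k\geq p+m$ for $k\leq m$, and the classes $\cK^q$ decrease in $q$, we obtain $\Delta^kF\in\cK^{p+m}$. The concave case is handled by passing to $-F$.

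Finally, Proposition~\ref{prop:326-Ext-33z} yields $F=P_m+\Sigma^{m+1}g$. By \eqref{lemma:GaussLim46597m}, $F(n)=P_m(n)$ for $n=1,\ldots,m+1$, so \eqref{eq:PolNew554} applies and gives
$$
P_m(x)=\sum_{j=0}^m\tchoose{x-1}{j}\,\Delta^jF(1).
$$
Therefore
$$
\Sigma^{m+1}Tf(x) ~=~ F(x)-\sum_{j=0}^m\tchoose{x-1}{j}\,\Delta^jF(1) ~=~ \rho_1^{m+1}[F](x-1),
$$
which is the claimed identity. The only delicate point is the degree bookkeeping in the second step. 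Note that no $\cD$-condition on $F$ is needed, because Proposition~\ref{prop:326-Ext-33z} places $\cD$-assumptions only on $g$.
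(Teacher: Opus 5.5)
Your proof is correct and matches the paper's argument: set $g=Tf$ and $F=T\Sigma^{m+1}f$, check that $\Delta^{m+1}F=g$ and $\Delta^kF\in\cK^{p+m}$ for $k\le m$, then apply Proposition~\ref{prop:326-Ext-33z} with the Newton form \eqref{eq:PolNew554} of $P_m$. Your derivation of $\Delta^kF\in\cK^{p+2m-k}\subset\cK^{p+m}$ from Proposition~\ref{prop:ffprime32} just makes explicit a step the paper asserts without comment.
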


\begin{proof}
Setting $g=Tf$ and $\bar f=T\Sigma^{m+1}f$, we have $\Delta^{m+1}\bar f=g$ and $\Delta^k\bar f\in\cK^{p+m}$ for $k=0,1,\ldots,m$. By Proposition~\ref{prop:326-Ext-33z} and the subsequent remark, it follows that
$$
\Sigma^{m+1}g ~=~\bar f-P_m,
$$
where $P_m$ is the polynomial defined by
$$
P_m(x) ~=~ \sum_{j=0}^m\tchoose{x-1}{j}\,\Delta^j\bar f(1).
$$
Consequently,
\begin{eqnarray*}
\Sigma^{m+1}T f(x) ~=~ \Sigma^{m+1}g(x)  &=& T\Sigma^{m+1}f(x)-\sum_{j=0}^m\tchoose{x-1}{j}\,\Delta^j T\Sigma^{m+1}f(1)\\
&=& \rho_1^{m+1}[T\Sigma^{m+1}f](x-1)\qquad (x>0),
\end{eqnarray*}
as claimed.
\end{proof}

%

As discussed above, Proposition~\ref{prop:GenLinOp3} is particularly relevant when $T$ is the standard derivative $D$ or one of its higher-order iterates. In this connection, recall from \cite[Section 7.1]{MarZen22} that if
$$
g ~\in ~ \cC^r(\R_+)\cap\cD^p\cap\cK^{\max\{p,r\}}
$$
for some $p,r\in\N$, then
$$
\Sigma g ~\in ~ \cC^r(\R_+)\cap\cD^{p+1}\cap\cK^{\max\{p,r\}}.
$$
Combining this fact with Proposition~\ref{prop:GenLinOp3} applied to $T=D^r$, we immediately derive the following result.

\begin{proposition}\label{prop:TD334}
If $f$ lies in $\cC^r(\R_+)\cap\cD^{p+r}\cap\cK^{p+2m+r}$ for some $m,p,r\in\N$, then
$$
\Sigma^{m+1}f^{(r)}(x) ~=~ \rho_1^{m+1}[D^r\Sigma^{m+1}f](x-1)\qquad (x>0).
$$
\end{proposition}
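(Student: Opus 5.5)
The plan is to apply Proposition~\ref{prop:GenLinOp3} with $T=D^r$, a linear operator that commutes with $\Delta$. Two things need checking: that $f\in\dom\Sigma^{m+1}$, and the two hypotheses
$$
D^rf\in\cD^{p}\cap\cK^{p+m}\quad\text{and}\quad D^r\Sigma^{m+1}f\in\cK^{p+2m}.
$$
Once these hold, the conclusion of Proposition~\ref{prop:GenLinOp3} is exactly the stated identity.

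First, I would show that $f\in\dom\Sigma^{m+1}$. Since $f\in\cD^{p+r}\cap\cK^{p+2m+r}\subset\cD^{p+r}\cap\cK^{(p+r)+m}$, Proposition~\ref{prop:SuffCondDomKm44} gives that $\Sigma^{m+1}f$ exists.

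Second, I would derive $D^rf\in\cD^p\cap\cK^{p+m}$ from the derivative part of Proposition~\ref{prop:ffprime324}, applied $r$ times. This requires care, because Proposition~\ref{prop:ffprime324} transfers only the pair $\cD^{q+1}\cap\cK^{q+1}_1$ to $\cD^q\cap\cK^q_1$, with matching indices. I would therefore proceed in two steps.
\begin{itemize}
\item[(a)] For membership in $\cD$: note $f\in\cD^{p+r}\cap\cK^{p+r}$, since $\cK^{p+2m+r}\subset\cK^{p+r}$. Replacing $f$ by $-f$ if needed, we may assume eventual convexity, because $\cK^{p+r}$-convexity of the given sign is inherited from $\cK^{p+2m+r}$ by the nested inclusions. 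Iterating Proposition~\ref{prop:ffprime324} $r$ times then gives $f^{(r)}\in\cD^p$.
\item[(b)] For membership in $\cK$: the third item of Proposition~\ref{prop:ffprime32}, applied on a half-line $(a,\infty)$ where $f$ is $(p+2m+r)$-convex, gives $f^{(r)}\in\cK^{p+2m}\subset\cK^{p+m}$.
\end{itemize}
The differentiability of $f$ needed here comes from $f\in\cC^r(\R_+)$.

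Third, I would establish $D^r\Sigma^{m+1}f\in\cK^{p+2m}$ by iterating the recalled fact from \cite[Section 7.1]{MarZen22}. We have $f\in\cC^r\cap\cD^{p+r}\cap\cK^{\max\{p+r,r\}}$, and $\cK^{p+2m+r}\subset\cK^{p+r}$. This only yields $\Sigma f\in\cC^r\cap\cD^{p+r+1}\cap\cK^{p+r}$, which loses convexity order. So I would instead apply the fact with the larger index: I expect the real content of \cite[Section 7.1]{MarZen22} to be that $\Sigma$ preserves $\cC^r\cap\cK^q$ whenever $g\in\cD^{p'}\cap\cK^q$ with $q\geq p'$, $q\geq r$. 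With $q=p+2m+r$ held fixed, $m+1$ iterations keep $\Sigma^{k}f\in\cC^r\cap\cD^{p+r+k}\cap\cK^{p+2m+r}$, which is valid as long as $p+r+k\leq p+2m+r$, that is, $k\leq 2m$; since $k\le m$ at each application, this holds. Hence $\Sigma^{m+1}f\in\cC^r\cap\cK^{p+2m+r}$. Another use of the third item of Proposition~\ref{prop:ffprime32} then gives $D^r\Sigma^{m+1}f\in\cK^{p+2m}$.

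The main obstacle is this last step: choosing correctly how to use the recalled regularity/convexity fact so that the convexity order $p+2m+r$ is preserved through $m+1$ applications of $\Sigma$. If the recalled statement must be read literally with $\max\{p,r\}$, I would apply it with $p$ replaced by $p+2m+r$. This is legitimate because $\cD^{p+r}\subset\cD^{p+2m+r}$, and so it gives $\Sigma^{k}f\in\cK^{p+2m+r}$ at each stage, which is all that is needed.
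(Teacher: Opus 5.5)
Your proposal is correct and takes the same route as the paper, which simply combines Proposition~\ref{prop:GenLinOp3} (with $T=D^r$) with the recalled fact from \cite[Section 7.1]{MarZen22}. Applying that fact with the index raised to $p+2m+r$, as you do, is exactly what keeps the convexity order intact through the $m+1$ applications of $\Sigma$. One slip: the sign of eventual convexity is \emph{not} inherited under $\cK^{p+2m+r}\subset\cK^{p+r}$ (e.g.\ $1/x$ is convex but decreasing). This is harmless, because $f\in\cK^{p+r}$ already lets you fix the sign by passing to $-f$.
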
 

Now apply Proposition~\ref{prop:TD334}, for instance, to the function $f(x)=-1/x$, which corresponds to the choice $r=1$ in \eqref{eq:63Fr}. Using the identity (see, e.g., \cite[p.\ 352]{Gel71})
$$
D_x\tchoose{x-1}{m} ~=~ \tchoose{x-1}{m}\,(\psi(x)-\psi(x-m))\qquad (x\neq 1,2,\ldots,m),
$$
a straightforward but somewhat lengthy computation yields
\begin{eqnarray*}
\Sigma^{m+1}_x\frac{1}{x^2} &=& -\tchoose{x-1}{m}\,\big((\psi(x)-\psi(x-m))(\psi(x)-\psi(m+1))+\psi'(x)-\psi'(1)\big)\\
&& \null + \sum_{j=1}^m\tchoose{x-1}{m-j}\Big(\frac{(-1)^{j+1}}{j}(\psi(1)-\psi(j+1)\Big).
\end{eqnarray*}
The details of the computation are omitted.

Returning to the study of the multiple gamma functions, we observe that a judicious combination of Propositions~\ref{prop:MulGamFuct5}, \ref{prop:RecF32}, and \ref{prop:TD334} yields the following somewhat surprising result, which provides explicit expressions for their first derivatives.

\begin{corollary}
For any $m\in\N$ and any $x>0$, we have
$$
G'_{m+1}(x) ~=~ G_{m+1}(x)\,\Big(\tchoose{x-1}{m}(\psi(x)-\psi(m+1))+\sum_{j=0}^m\tchoose{x-1}{j}\, G'_{m+1-j}(1)\Big).
$$
\end{corollary}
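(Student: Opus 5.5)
The plan is to apply Proposition~\ref{prop:TD334} to $f(x)=\ln x$ with $r=1$, and then to identify both sides using Propositions~\ref{prop:MulGamFuct5} and~\ref{prop:RecF32}.

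\textbf{Hypotheses.} By Proposition~\ref{prop:MulGamFuct5} (case $m=0$), the function $\ln x$ lies in $\cD^1\cap\cK^{\infty}$. Hence it lies in $\cC^1(\R_+)\cap\cD^{p+1}\cap\cK^{p+2m+1}$ with $p=1$, since $\cD^1\subset\cD^2$ and $\cK^\infty\subset\cK^{2m+2}$. This is exactly the hypothesis of Proposition~\ref{prop:TD334} with $r=1$.

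\textbf{Applying Proposition~\ref{prop:TD334}.} Since $f'(x)=1/x$ and $\Sigma^{m+1}_x\ln x=\ln G_{m+1}(x)$, the proposition gives
$$
\Sigma^{m+1}_x\frac{1}{x} ~=~ \rho_1^{m+1}[D\ln\circ\, G_{m+1}](x-1) ~=~ D\ln G_{m+1}(x)-\sum_{j=0}^m\tchoose{x-1}{j}\,\Delta^jD\ln G_{m+1}(1).
$$
The left-hand side equals $\tchoose{x-1}{m}(\psi(x)-\psi(m+1))$ by Proposition~\ref{prop:RecF32}.

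\textbf{Evaluating the correction terms.} Here I use that $D$ and $\Delta$ commute, which holds because $\ln\circ\, G_k$ is differentiable. By the functional equation $\Delta\ln G_{k+1}=\ln G_k$, iterated $j$ times for $j\le m$, we get
$$
\Delta^jD\ln G_{m+1} ~=~ D\Delta^j\ln G_{m+1} ~=~ D\ln G_{m+1-j},
$$
where the index $m+1-j\geq 1$ throughout the sum. Evaluating at $1$ and using $G_k(1)=1$ gives
$$
D\ln G_{m+1-j}(1) ~=~ \frac{G'_{m+1-j}(1)}{G_{m+1-j}(1)} ~=~ G'_{m+1-j}(1).
$$

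\textbf{Conclusion.} Solving the displayed identity for $D\ln G_{m+1}(x)=G'_{m+1}(x)/G_{m+1}(x)$ and multiplying by $G_{m+1}(x)>0$ yields the stated formula.

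The only delicate point is checking the membership hypotheses of Proposition~\ref{prop:TD334}, in particular the choice of $p$; everything after that is bookkeeping.
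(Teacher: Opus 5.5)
Your proposal is correct and follows the paper's proof essentially line for line. You apply Proposition~\ref{prop:TD334} with $f=\ln$ and $r=1$, identify the left side via Proposition~\ref{prop:RecF32}, rewrite the correction terms using $\Delta^j D\ln G_{m+1}=D\ln G_{m+1-j}$ and $G_k(1)=1$, and then solve for $D\ln G_{m+1}$; your explicit hypothesis check with $p=1$ (where $p=0$ would work equally well, since $\ln\in\cD^1$) spells out what the paper only describes as ``readily verified.''
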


\begin{proof}
Using Propositions~\ref{prop:MulGamFuct5}, \ref{prop:RecF32}, and \ref{prop:TD334}---after readily verifying that their respective assumptions are satisfied---we obtain, for $m\in\N$ and $x>0$,
\begin{eqnarray*}
\tchoose{x-1}{m}\, (\psi(x)-\psi(m+1)) &=& \Sigma^{m+1}_xD_x\ln x\\
&=& D_x\Sigma^{m+1}_x\ln x-\sum_{j=0}^m\tchoose{x-1}{j}\,\big[\Delta_z^jD_z\,\Sigma^{m+1}_z\ln z\big]_{z=1}\\
&=& D_x\ln G_{m+1}(x)-\sum_{j=0}^m\tchoose{x-1}{j}\, D\ln G_{m+1-j}(1).
\end{eqnarray*}
This completes the proof.
\end{proof}

\subsection{Negative Falling Factorials}

We conclude this section by examining the repeated principal indefinite sum of the function $g_n\colon\R_+\to\R$, defined for each $n\in\N$ by
$$
g_n(x) ~=~ (-1)^n\,\Delta^n_x\,\frac{1}{x} ~=~ \frac{n!}{x(x+1)\,\cdots\, (x+n)}\qquad (x>0).
$$
This function admits a simple representation in terms of a \emph{negative falling factorial power} (see \cite[p.\ 188]{GraKnuPat94})
$$
g_n(x) ~=~ (-1)^n\, n!\, (x-1)^{\underline{-n-1}} \qquad (x>0).
$$
The very definition of $g_n$ suggests applying Proposition~\ref{prop:GenLinOp3} with $T=\Delta^n$. However, since $g_n$ also possesses the partial fraction expansion
$$
g_n(x) ~=~ \sum_{k=0}^n\tchoose{n}{k}\,\frac{(-1)^k}{x+k}\qquad (x>0),
$$
it follows that, for any $m\in\N$, we may write
$$
\Sigma^{m+1}g_n(x) ~=~ \sum_{k=0}^n\tchoose{n}{k}\, (-1)^k\,\Sigma^{m+1}_x E_x^k\,\frac{1}{x}{\,},
$$
where $E$ is the classical \emph{shift} operator defined by $Ef(x)=f(x+1)$. Now, applying Proposition~\ref{prop:GenLinOp3} with $T=E^k$, followed by Proposition~\ref{prop:RecF32}, we obtain
\begin{multline*}
\Sigma^{m+1}_xE_x^k\,\frac{1}{x} ~=~ E_x^k\,\Sigma^{m+1}_x\,\frac{1}{x}-\sum_{j=0}^m\tchoose{x-1}{j}\,\Big[\Sigma_z^{m+1-j}\,\frac{1}{z}\Big]_{z=k+1}\\
=~ \tchoose{x+k-1}{m}(\psi(x+k)-\psi(m+1))-\sum_{j=0}^m\tchoose{x-1}{j}\tchoose{k}{m-j}(\psi(k+1)-\psi(m+1-j)).
\end{multline*}
In the special case $m=0$, this formula simplifies to
$$
\Sigma g_n(x) ~=~ \sum_{k=0}^n\tchoose{n}{k}\, (-1)^k\, (\psi(x+k)-\psi(k+1))\qquad (x>0).
$$

\section{Discrete Analog of Cauchy's Formula for Repeated Integration}

We begin this final section by recalling the well-known Taylor theorem with integral remainder, as it applies to real-valued functions defined on $\R_+$. This result can be proved in a straightforward manner by induction, using repeated integration by parts.

\begin{theorem}[Taylor's Theorem]\label{thm:71Tay32}
Let $a>0$ and let $f\in\cC^{m+1}(\R_+)$ for some $m\in\N$. Then, for any $x\geq 0$, we have
$$
f(x+a) ~=~ \sum_{j=0}^m\frac{x^j}{j!}\, f^{(j)}(a)+\int_0^x\frac{(x-t)^m}{m!}\, f^{(m+1)}(t+a)\, dt.
$$
\end{theorem}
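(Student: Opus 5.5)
We argue by induction on $m$, integrating by parts at each step, in parallel with the proof of Lemma~\ref{lemma:Taylo68IntFReD}. Throughout, fix $a>0$ and $x\geq 0$. The integrand $t\mapsto f^{(m+1)}(t+a)$ is continuous on $[0,x]$ because $f\in\cC^{m+1}(\R_+)$ and $t+a\geq a>0$, so every integral below is a well-defined Riemann integral.

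For the base case $m=0$ we have $f\in\cC^1(\R_+)$. The fundamental theorem of calculus applied to $t\mapsto f(t+a)$ on $[0,x]$ gives
$$
f(x+a) ~=~ f(a)+\int_0^x f'(t+a)\,dt,
$$
which is exactly the claimed identity for $m=0$.

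Now assume the identity holds for some $m$, and let $f\in\cC^{m+2}(\R_+)$. Since $f\in\cC^{m+1}(\R_+)$ as well, the induction hypothesis applies. We transform its remainder by integrating by parts, with
$$
u(t)=f^{(m+1)}(t+a),\qquad dv=\frac{(x-t)^m}{m!}\,dt,\qquad v(t)=-\frac{(x-t)^{m+1}}{(m+1)!}.
$$
This is legitimate because $u$ is $\cC^1$ on $[0,x]$. The computation gives
$$
\int_0^x\frac{(x-t)^m}{m!}\,f^{(m+1)}(t+a)\,dt ~=~ \Big[-\frac{(x-t)^{m+1}}{(m+1)!}\,f^{(m+1)}(t+a)\Big]_0^x+\int_0^x\frac{(x-t)^{m+1}}{(m+1)!}\,f^{(m+2)}(t+a)\,dt.
$$
The boundary term vanishes at $t=x$. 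At $t=0$ it contributes $\frac{x^{m+1}}{(m+1)!}f^{(m+1)}(a)$, which is precisely the missing term $j=m+1$ of the Taylor sum. Substituting this back into the induction hypothesis yields the identity for $m+1$, which completes the induction.

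There is no real obstacle. The only points needing care are the sign of $v$, so that the boundary term comes out positive, and checking that the regularity $\cC^{m+2}$ is exactly what the integration by parts requires. The case $x=0$ is trivial: both sides reduce to $f(a)$.
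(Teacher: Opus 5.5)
Your proposal is correct and follows exactly the route the paper indicates: induction on $m$, with the base case given by the fundamental theorem of calculus and the inductive step by integration by parts, where the boundary term at $t=0$ supplies the new Taylor term $\frac{x^{m+1}}{(m+1)!}f^{(m+1)}(a)$. Your signs and regularity checks are also right.
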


We now offer a discrete analog of this theorem, in which the integral remainder is naturally replaced with a principal indefinite sum. This result---which may be regarded as a generalization of Lemma~\ref{lemma:Taylo68IntFReD}---is stated in the following theorem, and its proof likewise proceeds by induction, relying on repeated summation by parts. This approach, however, requires several assumptions ensuring the existence of the principal indefinite sums involved.

\begin{theorem}[Discrete Counterpart of Taylor's Theorem]\label{thm:72DiscTay33}
Let $a>0$, $m\in\N$, and let $f\colon\R_+\to\R$ be a function satisfying the following assumptions for $k=0,\ldots,m$:
\begin{itemize}
\item the function $t\mapsto\tchoose{x-t}{k}\,\Delta^{k+1}f(t+a-1)$ lies in $\dom\Sigma$;
\item the function $t\mapsto\tchoose{x-t+1}{k}\,\Delta^kf(t+a-1)$ lies in $\cD^{p_k}\cap\cK^{p_k}$ for some $p_k\in\N^*$.
\end{itemize}
Then, for any $x\geq 0$, we have
$$
f(x+a) ~=~ \sum_{j=0}^m\tchoose{x}{j}\,\Delta^jf(a)+\big[\Sigma_t\,\tchoose{x-t}{m}\,\Delta^{m+1}f(t+a-1)\big]_{t=x+1}.
$$
\end{theorem}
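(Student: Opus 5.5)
Fix $x\geq 0$ and argue by induction on $m$, mimicking integration by parts in Taylor's theorem. Write $h_k(t)=\Delta^kf(t+a-1)$, so that $\Delta_t h_k(t)=h_{k+1}(t)$ and $h_k(1)=\Delta^kf(a)$. Also put $R_k=\big[\Sigma_t\tchoose{x-t}{k}h_{k+1}(t)\big]_{t=x+1}$. The theorem asserts $f(x+a)=\sum_{j=0}^m\tchoose{x}{j}\Delta^jf(a)+R_m$. It therefore suffices to prove the base case $f(x+a)=f(a)+R_0$ and the recursion $R_k=\tchoose{x}{k+1}h_{k+1}(1)+R_{k+1}$ for $k=0,\ldots,m-1$.

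\textbf{Base case.} Here $R_0=\big[\Sigma_t\,\Delta_th_0(t)\big]_{t=x+1}$. By the second assumption with $k=0$, $h_0\in\cD^{p_0}\cap\cK^{p_0}$ with $p_0\geq 1$. The special case $m=0$ of Corollary~\ref{cor:326-Ext-33zc} ($\Sigma\Delta F=F-F(1)$ for $F\in\cD^p\cap\cK^p$, $p\in\N^*$) gives $R_0=h_0(x+1)-h_0(1)=f(x+a)-f(a)$.

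\textbf{Summation by parts.} Put $u(t)=\tchoose{x-t+1}{k+1}$. Pascal's rule gives $\Delta_t u(t)=-\tchoose{x-t}{k}$. The product rule $\Delta(uv)(t)=u(t+1)\Delta v(t)+v(t)\Delta u(t)$ with $v=h_{k+1}$ then yields
$$
\tchoose{x-t}{k}h_{k+1}(t)=\tchoose{x-t}{k+1}h_{k+2}(t)-\Delta_t\Big[\tchoose{x-t+1}{k+1}h_{k+1}(t)\Big].
$$
Now apply $\Sigma$ to both sides, using its linearity on $\dom\Sigma$ (cf.\ \cite[Chapter 5]{MarZen22}):
\begin{itemize}
\item The left side and the first term on the right lie in $\dom\Sigma$ by the first assumption, for indices $k$ and $k+1$.
\item For the last term, the second assumption at index $k+1$ puts $t\mapsto\tchoose{x-t+1}{k+1}h_{k+1}(t)$ in $\cD^{p_{k+1}}\cap\cK^{p_{k+1}}$ with $p_{k+1}\geq 1$. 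Hence its $\Delta$ lies in $\dom\Sigma$ by Proposition~\ref{prop:ffprime324}, and $\Sigma\Delta F=F-F(1)$ applies.
\end{itemize}
Evaluating at $t=x+1$, the term $F(x+1)=\tchoose{0}{k+1}h_{k+1}(x+1)$ vanishes, and $F(1)=\tchoose{x}{k+1}h_{k+1}(1)$. This gives exactly $R_k=R_{k+1}+\tchoose{x}{k+1}\Delta^{k+1}f(a)$, which is the recursion. Chaining the base case with the recursions up to $k=m-1$ proves the theorem.

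\textbf{Main obstacle.} The delicate step is justifying linearity of $\Sigma$ and the identity $\Sigma\Delta F=F-F(1)$. Both require a common order $p$ for which all three functions lie in $\cD^p\cap\cK^p$, and the sum of eventually $p$-convex and eventually $p$-concave functions need not be in $\cK^p$. I would handle this by noting that the equation $\Delta\Phi=\psi$ is linear. Each side's $\Sigma$-image is then characterized by Proposition~\ref{prop:33-a7asd6f5Char}. It remains to check that $\Sigma(A)-\Sigma(B)$ is the principal sum of $A-B$, using that $A-B=-\Delta F$ with $F\in\cK^{p_{k+1}}$. Thus $-(F-F(1))$ and $\Sigma A-\Sigma B$ are two solutions of the same equation vanishing at $1$. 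To identify them I would invoke uniqueness in the form of the limit formula \eqref{eq:lim1}, which is additive whenever all the limits exist.
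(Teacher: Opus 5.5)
Your proof is correct and follows the paper's own argument. Like the paper, you use induction on $m$ (your telescoping recursion $R_k=\tchoose{x}{k+1}\Delta^{k+1}f(a)+R_{k+1}$ is just the unrolled induction step). You do the summation by parts with $\Delta_t\tchoose{x-t+1}{k+1}=-\tchoose{x-t}{k}$, apply $\Sigma\Delta F=F-F(1)$ to the boundary term, and observe that $F$ vanishes at $t=x+1$. The hypotheses are used at exactly the same indices as in the paper.

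Your extra discussion of additivity of $\Sigma$ addresses a step the paper uses silently, and your limit-formula fix works. The map $g\mapsto f_n^q[g]$ is linear. Moreover, for $q$ at least the orders of $A$, $B$ and $\Delta F$, it still converges to $\Sigma$ of each, because the added terms $\tchoose{x}{j}\Delta^{j-1}g(n)$ with $j-1\geq p$ tend to $0$ when $g\in\cD^p$.
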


\begin{proof}
We proceed by induction on $m$. The result holds for $m=0$. Indeed, using the second assumption on $f$ (with $k=0$), we obtain
\begin{eqnarray*}
\big[\Sigma_t\,\Delta f(t+a-1)\big]_{t=x+1} &=& [f(t+a-1)-f(a)]_{t=x+1}\\
&=& f(x+a)-f(a).
\end{eqnarray*}
Suppose now that the result holds for some $m\in\N$, and let us show that it still holds for $m+1$. Using the first assumption on $f$, we obtain
\begin{multline*}
\Sigma_t\,\tchoose{x-t}{m}\,\Delta^{m+1}f(t+a-1) ~=~ \Sigma_t\,\big({-}\Delta_t\tchoose{x-t+1}{m+1}\big)\,\Delta^{m+1}f(t+a-1)\\
=~ \Sigma_t\,\Big(\Delta_t\Big({-}\tchoose{x-t+1}{m+1}\,\Delta^{m+1}f(t+a-1)\Big)+\tchoose{x-t}{m+1}\,\Delta^{m+2}f(t+a-1)\Big).
\end{multline*}
Using the second assumption on $f$, this expression becomes
$$
{-}\tchoose{x-t+1}{m+1}\,\Delta^{m+1}f(t+a-1)+\tchoose{x}{m+1}\,\Delta^{m+1}f(a)+\Sigma_t\,\tchoose{x-t}{m+1}\,\Delta^{m+2}f(t+a-1).
$$
Evaluating at $t=x+1$, we obtain
$$
\tchoose{x}{m+1}\,\Delta^{m+1}f(a)+\big[\Sigma_t\,\tchoose{x-t}{m+1}\,\Delta^{m+2}f(t+a-1)\big]_{t=x+1}\, .
$$
Using the induction hypothesis, we conclude that the result holds for $m+1$, which completes the proof.
\end{proof}

Theorem~\ref{thm:72DiscTay33} is remarkable in that it provides an explicit expression for the remainder term in the Newton expansion at $x=a$ of the function $f(x+a)$, expressed in terms of a principal ``definite'' sum. This is entirely analogous to Theorem~\ref{thm:71Tay32}, where the remainder term in the Taylor expansion is expressed as an integral.

Actually, Theorem~\ref{thm:72DiscTay33} is entirely new and likely holds significant potential for applications. Simplifying its assumptions, however, remains an interesting open problem.

We conclude this paper with a remarkable identity that reduces a repeated principal indefinite sum to a single principal ``definite'' sum.

\begin{corollary}\label{cor:72DiscTay34}
Let $m\in\N$, let $g\in\dom\Sigma^{m+1}$, and suppose that the function $f=\Sigma^{m+1}g$ satisfies the assumptions of Theorem~\ref{thm:72DiscTay33}. Then, we have
$$
\Sigma^{m+1}g(x) ~=~ \big[\Sigma_t\,\tchoose{x-t-1}{m}\, g(t)\big]_{t=x}\qquad (x>0).
$$
\end{corollary}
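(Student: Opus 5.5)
We apply Theorem~\ref{thm:72DiscTay33} to $f=\Sigma^{m+1}g$ with $a=1$, evaluated at the point $x-1$, and then use the vanishing of $\Sigma^{m+1}g$ at $1,\ldots,m+1$ to kill the Newton polynomial part. A preliminary remark: Theorem~\ref{thm:72DiscTay33} is stated for $x\geq 0$, so with $x$ replaced by $x-1$ we need $x-1\geq 0$. For $0<x<1$ we either argue directly, since the proof of the theorem only uses the algebraic summation-by-parts identities, which remain valid for $x>0$ after the shift, or we note that the assumptions are stated for the shifted function. I will follow the first route and check that the inductive argument never uses $x\geq0$ beyond well-definedness.

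\textbf{Step 1: apply the discrete Taylor theorem.} Since $f=\Sigma^{m+1}g$ satisfies the assumptions of Theorem~\ref{thm:72DiscTay33}, taking $a=1$ and writing $y=x-1$ gives
$$
\Sigma^{m+1}g(x) ~=~ f(y+1) ~=~ \sum_{j=0}^m\tchoose{x-1}{j}\,\Delta^jf(1)+\big[\Sigma_t\,\tchoose{x-1-t}{m}\,\Delta^{m+1}f(t)\big]_{t=x}.
$$
Here the evaluation point is $t=y+1=x$, and $f(t+a-1)=f(t)$.

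\textbf{Step 2: simplify the remainder and the Newton part.} By the first part of Corollary~\ref{cor:326-Ext-33zc}, $\Delta^{m+1}f=\Delta^{m+1}\Sigma^{m+1}g=g$, so the remainder becomes $[\Sigma_t\tchoose{x-t-1}{m}g(t)]_{t=x}$. For the Newton part, we have $\Delta^jf=\Delta^j\Sigma^{m+1}g=\Sigma^{m+1-j}g$ for $j=0,\ldots,m$, using $\Delta\Sigma h=h$ repeatedly. By \eqref{lemma:GaussLim46597m} applied with $m-j$ in place of $m$, each $\Sigma^{m+1-j}g$ vanishes at $1$. Hence $\Delta^jf(1)=0$ for all $j\le m$. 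Equivalently, $f$ vanishes at $1,\ldots,m+1$, so all its forward differences of order $\le m$ at $1$ vanish. The polynomial sum therefore disappears and the identity follows.

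\textbf{Main obstacle.} The difficulty is not the algebra but the bookkeeping. First, the Taylor theorem must be applied at the shifted point with the correct evaluation $t=x$ rather than $t=x+1$. Second, we must make sure the range $0<x<1$ is covered. If the direct verification for $0<x<1$ fails, a fallback is to prove the identity for $x\geq1$ first. For general $x>0$ we would then note that both sides satisfy $\Delta^{m+1}$ equal to the same function, and invoke the uniqueness in Proposition~\ref{prop:326-Ext-33z}. Doing so would, however, require convexity information on the right-hand side as a function of $x$, so I would try the direct route first.
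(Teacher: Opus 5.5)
Your proposal is correct and follows the paper's own proof: apply Theorem~\ref{thm:72DiscTay33} to $f=\Sigma^{m+1}g$ with $a=1$ at the point $x-1$, replace $\Delta^{m+1}f$ by $g$, and drop the Newton part because $\Delta^jf(1)=\Sigma^{m+1-j}g(1)=0$ for $j\le m$. You also observe that the theorem's inductive proof only needs $x+1>0$: evaluation at $t=x+1$ must be admissible, and $\tchoose{0}{m+1}=0$ kills the boundary term. So the theorem extends to the shifted range needed for $0<x<1$, which is exactly what the paper's parenthetical ``assuming $x>0$'' tacitly relies on.
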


\begin{proof}
The result is immediate by applying Theorem~\ref{thm:72DiscTay33} (assuming $x>0$) to the function $f=\Sigma^{m+1}g$ at the value $a=1$. Explicitly,
$$
\Sigma^{m+1}g(x) ~=~ \sum_{j=0}^m\tchoose{x-1}{j}\,\Sigma^{m+1-j}g(1)+\big[\Sigma_t\,\tchoose{x-t-1}{m}\, g(t)\big]_{t=x}\qquad (x>0).
$$
This completes the proof.
\end{proof}

\begin{example}
Applying Corollary~\ref{cor:72DiscTay34} to $g(x)=\ln x$ with $m=1$, we obtain
\begin{eqnarray*}
\ln G(x) &=& \Sigma_x^2\ln x ~=~ \big[\Sigma_t\, (x-1-t)\ln t\big]_{t=x}\\
&=& (x-1)\ln\Gamma(x)-\Sigma_x (x\ln x) ~=~ (x-1)\ln\Gamma(x)-\ln K(x),
\end{eqnarray*}
where $K\colon\R_+\to\R$ is the $K$-function defined by (see, e.g., \cite[Section 12.5]{MarZen22})
$$
\ln K(x) ~=~ \Sigma_x (x\ln x)\qquad (x>0).
$$
Note that the function $x\mapsto x\ln x$ lies in $\cD^2\cap\cK^{\infty}$.
\end{example}

The ``continuous'' analog of Corollary~\ref{cor:72DiscTay34} is the well-known \emph{Cauchy formula for repeated integration}, which can be described as follows. Let $D^{-1}$ denote the operator from $\cC^0(\R_+)$ to $\cC^1(\R_+)$ defined by
$$
D^{-1}g(x) ~=~ \int_1^xg(t)\, dt.
$$
It is straightforward to verify that for $g\in\cC^0(\R_+)$ and $f\in\cC^1(\R_+)$,
$$
DD^{-1}g ~=~ g\qquad\text{and}\qquad D^{-1}Df ~=~ f-f(1),
$$
in complete analogy with the case $m=0$ of Corollary~\ref{cor:326-Ext-33zc}.

Repeated application of the operator $D^{-1}$ then gives
$$
D^{-(m+1)}g(x) ~=~ \int_1^x\int_1^{x_m}\cdots\int_1^{x_1}g(t)\,\, dt\, dx_1\,\cdots\, dx_m\, ,
$$
and the Cauchy formula for repeated integration asserts that
$$
D^{-(m+1)}g(x) ~=~ \int_1^x\frac{(x-t)^m}{m!}\, g(t)\, dt.
$$
This identity is obtained directly by applying Theorem~\ref{thm:71Tay32} (assuming $x>0$) to the function $f=D^{-(m+1)}g$ at $a=1$. For this reason, it is natural to refer to the result in Corollary~\ref{cor:72DiscTay34} as the \emph{discrete analog of Cauchy’s formula for repeated integration}.

\section*{Declaration of competing interest}

The authors declare that they have no conflict of interest.


\end{document}